\newtheorem{prop}{Proposition}
\newtheorem{coro}{Corollary}
\newtheorem{theorem}{Theorem}
\newtheorem{example}{Example}
\newtheorem{lemma}{Lemma}
\newcommand{\n}[0]{\hspace*{.35em}}
\newcommand{\nn}[0]{\hspace*{.7em}}
\title{Statistical Models for Degree Distributions of Networks}
\author{
Kayvan Sadeghi \\
Department of Statistics\\
Carnegie Mellon University\\
Pittsburgh, PA 15213 \\
\texttt{kayvans@andrew.cmu.edu} \\
\And
Alessandro Rinaldo \\
Department of Statistics\\
Carnegie Mellon University \\
Pittsburgh, PA 15213 \\
\texttt{arinaldo@cmu.edu} \\
}
\begin{document}

\maketitle

\begin{abstract}
We define and study the statistical models in exponential family form whose
 sufficient statistics are  the degree distributions and the  bi-degree
 distributions of undirected labelled simple graphs. Graphs that are constrained
 by the joint degree distributions are called $dK$-graphs in the computer
 science literature and this paper attempts to provide the first statistically
 grounded analysis of
 this type of models. In addition to formalizing these models,  we provide
 some preliminary results for the parameter estimation and the asymptotic
 behaviour of the model for degree distribution, and discuss the parameter
 estimation for the model for bi-degree distribution.
\end{abstract}

\section{Introduction and background} \paragraph{Introduction.}
\emph{Exponential random graph models} (ERGMs) form a flexible and powerful family of statistical models for network data, used in variety of fields
and especially in the social scenes; see \cite{rob07} and other papers within the same special issue. These models are of exponential family form with proposed sufficient statistics that can range from the number of edges of the network \cite{erd59} to the number of $k$-stars, or other graphical features of networks; see for example \cite{hol81,fra86}.

While ERGMs can be specified using any network statistics, it has been long
known  that  node degrees
 and statistics thereof have great expressive power in representing and
 modeling networks, perhaps more than most network statistics.  See, e.g.,
 \cite{new03} and \cite{han07}. Some of the recent literature on ERGMs have
 focussed on the properties of the \emph{beta model}, for which the degree
 sequence of a network is the sufficient statistic and postulates independent
 edges; see \cite{bli10},  \cite{cha11}, and \cite{rin13}.
In this paper we consider instead the less studied class of ERGMs whose
sufficient statistics are derived from the {\it joint degree distribution} of the
nodes, for which the assumption of dyadic independence no longer holds.

This work is motivated in great part by the desire to describe the statistical
properties of a
class of network statistics known as  \emph{$d$K-graphs}, originally proposed by
\cite{mah06}. $d$K-graphs
were originally formulated as a means to capture increasingly refined properties of
networks in a hierarchical manner based on higher order interactions among node
degrees (see, e.g., \cite{dim09}).
Despite the significant appeal of $d$K graphs as summary statistics of networks
and their ability to capture stochastic dependencies among noes (see, in
particular,  \cite{shi11}),
to the best of our knowledge, the statistical properties of such statistics have not
been investigated. The purpose of this paper is to uninitiate such study and to
offer some preliminary but non-trivial results that highlight the
complexity and modeling power of these statistics. Like it is often the case
with ERGMs relying on network statistics that are not based on dyadic
independence, the theoretical analysis is particularly challenging.

\paragraph{Contributions.} This paper contains the following
contributions. We formally define ERGMs based on $d$K sequences of
different dimensions. For the $1$K and $2$K models, we derive conditions
for the existence of the MLE of the model parameter and, therefore, for their
estimability. We consider  the general case of $m\geq 1$ i.i.d observations,
which in particular includes the more interesting and common  case of $m=1$
observed network. We are concerned with the asymptotic behavior of the above model and
compare it specifically to the behavior of the dense Erd\"{o}s-R\'{e}nyi model.
We show that the $1K$ model is in fact radically different from  the
Erd\"{o}s-R\'{e}nyi model, an appealing feature that  many
ERGMs do not always possess, as demonstrated by
\cite{cha13}. Finally, we define the ERGM with bi-degree distribution and
provide ideas and conjectures for further work for such models.

\paragraph{Basic definitions and concepts.} We denote the set of simple,
undirected, labeled graphs with $n$
nodes by $\mathcal{G}_n$. If in an iid sample there is at least one observation
of each members of $\mathcal{G}_n$, a natural estimation of the probability of
observing $g\in\mathcal{G}_n$ is the ratio of the number of observations of $g$
to the total number of observations. However, in most practical cases, there are
considerably fewer observations than this, and in the most common case, there is only one observation available.

For the above purpose, we propose a family of ERGMs in order to extract as much information as possible on the (joint) degrees of nodes of the few available observations. More precisely, the sufficient statistics of a model in this family are scaled forms of the number of induced connected subgraphs with a specific number of nodes with the same degree sequence. We call such subgraphs \emph{configurations}. For example, for one-node configurations of the graph $g$ with $n$ nodes, that is, nodes of $g$, the sufficient statistics are the number of nodes with degrees $(0,\dots,n-1)$, denoted by $n^{(1)}(g)=(n_0(g),n_1(g),\dots,n_{n-1}(g))$; and  for two-node configurations of $g$, that is, edges of $g$, the sufficient statistics are the number of edges with degrees $((1,1),(1,2),\dots,(n-2,n-1),(n-1,n-1))$, denoted by $n^{(2)}(g)=(n_{11}(g),n_{12}(g),\dots,n_{n-1,n-1}(g))$, whose components are indexed lexicographically. The three-node configurations of $g$ are triangles (i.e., complete subgraphs with three nodes) and $2$-stars (i.e., subgraphs with three nodes, in which a node is connected to two non-adjacent nodes) of $g$, and so on. For example, in the graph $g$ in Figure 1, $n^{(1)}(g)=(n_0(g),n_1(g),n_2(g),n_3(g))=(0,1,2,1)$ and $n^{(2)}(g)=(n_{11}(g),n_{12}(g),n_{13}(g),n_{22}(g),n_{23}(g),n_{33}(g))=(0,0,1,1,2,0)$.
%\begin{figure}[ht]
%\vskip 0.2in
%\begin{center}
%\centerline{\includegraphics[width=\columnwidth]{figuredk}}
%\caption{Graph $g$ with $n^{(1)}(g)=(0,1,2,1)$ and $n^{(2)}(g)=(0,0,1,1,2,0)$}
%\label{fig:0}
%\end{center}
%\vskip -0.2in
%\end{figure}
\begin{figure}[H]\label{fig:0}
\centering
\includegraphics[scale=0.07]{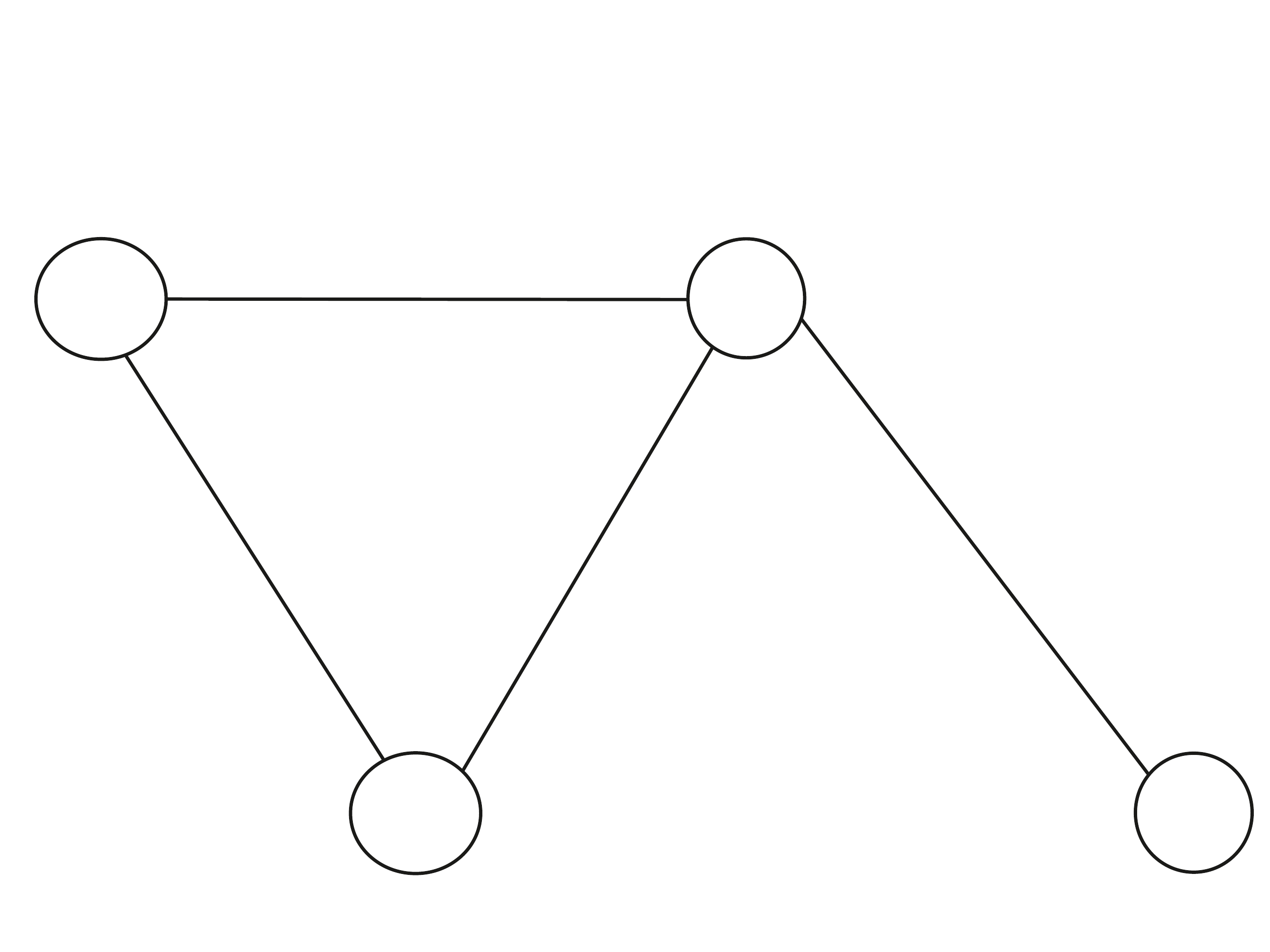}
\caption{{\footnotesize Graph $g$ with $n^{(1)}(g)=(0,1,2,1)$ and $n^{(2)}(g)=(0,0,1,1,2,0)$.}}
\end{figure}
In general, the vector $n^{(d)}(g)$ of length ${n\choose d}$ can be defined in the same fashion for connected subgraphs with $d$ nodes (i.e.\ \emph{configurations}), and it can be represented in a normalized form, scaled by the number of all $d$-node configurations in the graph. For example, since $\sum_{k=0}^{n-1}n_{k}(g)=n$, instead of $n^{(1)}(g)$, one can use $n^{(1)}(g)/n$, which is generally called the \emph{degree distribution} of $g$. In higher orders, $n^{(d)}(g)$ is called the \emph{joint degree distribution} of $g$, and in particular the normalized $n^{(2)}(g)$ is called the \emph{bi-degree distribution} of $g$.

Notice also that $n^{(1)}(g)$ is another way of expressing, and hence contains no more information than, the \emph{degree partition} of the unlabelled graph $g$, that is, $d(g)=(d_1(g),\dots,d_n(g))$, where each $d_i$ represents the degree of a node in $g$, usually ordered in  a way such that $d_1(g)\geq d_2(g)\geq\dots\geq d_n(g)$. The degree sequence can also be generalized for higher orders to give the \emph{joint degree partition} of a graph.

\paragraph{The $d$K-graph models.} The set of graphs constrained by the joint degree distribution is called the \emph{$d$K-graphs}, where $d$ is the same as above, standing for the number of nodes of the configuration. The joint distribution itself is sometimes called the $d$K-distribution. As a convention, the $0$K-distribution is the \emph{average degree}, that is, the ratio of the number of edges to the number of nodes in the graph. Thus $n^{(0)}(g)$ can be defined as $e(g)$, the number of edges of $g$.  The corresponding model is equivalent to the Erd\"{o}s-R\'{e}nyi model after reparameterizing  $p$ in the $0$K model with its odd $p/(1-p)$.

%It has been shown that  .

For the above reason, we refer to the proposed statistical models in exponential family from $\exp\{\sum_{k=0}^{n-1}s_k(g)\alpha_k-\psi(\alpha)\}$, with sufficient statistics $s_k(g)=n^{(d)}(g)$ as the \emph{$d$K models}. These models assign the same probability of occurrence to all graphs with the same joint degree distributions of corresponding order.

These models are also hierarchical in the sense that  from $n^{(d)}$, all $n^{(d')}$, $0\leq d'\leq d$, are uniquely determined, whereas the converse is not true. For example, the following observation provides a method for obtaining $n^{(0)}$ from $n^{(1)}$:
\begin{equation}\label{eq:00}
e(g)=\frac{1}{2}\sum_{k=0}^{n-1}kn_k(g);
\end{equation}
and the following observations provide a method for obtaining $n^{(1)}$ from $n^{(2)}$:
\begin{subequations}
\begin{equation}
n_k(g)=\frac{1}{k}(\sum_{k'=1}^{k}n_{k'k}(g)+\sum_{k'=k}^{n-1}n_{kk'}(g)),k\in\{1,\dots,n-1\}; \label{eq:01}\\
\end{equation}
\begin{equation}
n_0(g)=n-\sum_{k=1}^{n-1}n_k(g).\label{eq:02}
\end{equation}
\end{subequations}
Hence by moving from a higher order model to a lower one, one loses some information that can be inferred from the data. This can be seen as the fact that, by moving towards a lower order, there would be a larger sets of graphs with the same sufficient statistics. Lower order models, on the other hand, enable us to infer the structure of a larger set of possible graphs based on the more local characteristics of the observed graphs. Bespeaking, for  lower order models, more parameters can be estimated when there are no observations of some joint degree distributions at all.

 %For more discussion on this, see the next sections. Therefore, there is a tradeoff in choosing the appropriate order of the model between the amount of information that can be inferred and the diversity of inference for different graph structures.

However, in practice, there are usually very few observations, and in many cases only one observation available; hence it is normally more practical to work with the $1$K or $2$K models. This is the direction we take in this paper, and hence we only focus on the $1$K and $2$K models.

%It is easy to observe that all these models belong to the class of exponential families with the mentioned sufficient statistics. This enables us to apply the well-developed theory of exponential families to these models, for example, concerning the maximum likelihood estimation and the goodness of fit testing.

%For the extreme ends of the hierarchy, the $n$K model is equivalent to only assigning probability to the observed graphs as the ratio of the number of observations of to the total number of observations. On the other hand, it is easy to show that the $0$k-model is equivalent to the Erd\"{o}s-R\'{e}nyi model after reparameterizing  $p$ in the $0$K model with its odd $p/(1-p)$.

%Our focus here in this paper is on the $1$K and $2$K models, where there is one or there are few observations available.
\paragraph{Structure of the paper.} In the next section, we study the $1$K model, discussing the calculation of the normalizing constant, the existence and calculation of maximum likelihood estimation, as well as the asymptotic behavior of the MLE. In Section 3 we apply a parallel theory in order to define the $2K$ model, and we present some ideas and conjectures regarding the maximum likelihood estimation for further work.
% We briefly discuss the same issues for the higher order $d$K models in Section 4.

\section{The $1$K model}
\paragraph{The exponential family form.} As mentioned before, our goal is to model the probability of observing a network $g$ with $n$ nodes. Denote the probability of a node having degree $k$ by $p_k$. Considering the ordered vector of possible degrees $(0,\dots,n-1)$ in $g$, the probability of observing $g$ can be written as
\begin{equation}\label{eq:1}
P(g)=\varphi(p)\prod_{k=0}^{n-1}p_k^{n_k(g)},
\end{equation}
where $n_k(g)$ is the number of nodes with degree $k$ in $g$, $\varphi$ is a \emph{normalizing constant}, and $\sum_{k=0}^{n-1} p_k=1$.

To write (\ref{eq:1}) in a minimal form, we use the parametrization $\tilde{p}_k=p_k/p_{n-1}$, which by using the fact that $\tilde{p}_{n-1}=1$, implies $p_{n-1}=1/(1+\sum_{k=0}^{n-2}\tilde{p}_k)$. Therefore, (\ref{eq:1}) can be rewritten as
\begin{equation}\label{eq:2}
P(g)=\varphi(p)p_{n-1}^{\sum_{k=0}^{n-1}n_k(g)}\prod_{k=0}^{n-1}\tilde{p}_k^{n_k(g)}=\frac{\phi(\tilde{p})}{(1+\sum_{k=0}^{n-2}\tilde{p}_k)^n}\prod_{k=0}^{n-2}\tilde{p}_k^{n_k(g)}.
\end{equation}
This in turn can be parametrized in exponential family form as
\begin{equation}\label{eq:3}
P(g)=\exp \left\{ \sum_{k=0}^{n-2}n_k(g)\alpha_k-\psi(\alpha) \right\},
\end{equation}
where $\alpha_k=\log \tilde{p}_k$ and $\psi(\alpha)=n\log(1+\sum_{k=0}^{n-2}\exp(\alpha_k))-\log\phi(\tilde{p})$.

 We reduced the dimension of the sufficient statistics by arbitrarily removing the element $n_{n-1}$ , to obtain $n^{(1)}_-(g)$ as sufficient statistics. We see that this model assigns the same probability to graphs with the same degree distribution, which is the only information the model collects from the data.
\paragraph{Calculating the normalizing constant.} We know that $\sum_{g\in\mathcal{G}_n}P(g)=1$, where $\mathcal{G}_n$ is the set of all non-isomorphic graphs with $n$ nodes. We use this to calculate the normalizing constant $\psi(\alpha)=\psi_n(\alpha)$ for a fixed $n$.
In this case the normalizing constant can be calculated directly from the set of all graphs with $n$ nodes. Denote all these graphs by $\mathcal{G}_n=\{g_1,\dots,g_M\}$, where $M=2^{{n\choose 2}}$.
\begin{prop}\label{prop:1}
The normalizing constant for the $1$K model can be written as
\begin{equation}\label{eq:5}
\psi(\alpha)=\log(1+\dots+\prod_{k=0}^{n-2}\tilde{p}_k^{n_k(g_i)}+\dots+\tilde{p}_0^n)=\log(1+\dots+e^{\sum_{k=0}^{n-2}n_k(g_i)\alpha_k}+\dots+e^{n\alpha_0}),
\end{equation}
where there are $M$ terms corresponding to each $g_i\in\mathcal{G}_n$, and the first and the last terms correspond to the complete and the null graphs.
\end{prop}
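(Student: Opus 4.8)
The plan is to read the formula off directly from the normalization identity $\sum_{g\in\mathcal{G}_n}P(g)=1$ together with the exponential-family representation (\ref{eq:3}). Enumerating $\mathcal{G}_n=\{g_1,\dots,g_M\}$ with $M=2^{\binom{n}{2}}$ and substituting (\ref{eq:3}) into the identity gives
\begin{equation*}
1 = \sum_{i=1}^{M}\exp\Big\{\sum_{k=0}^{n-2}n_k(g_i)\alpha_k-\psi(\alpha)\Big\} = e^{-\psi(\alpha)}\sum_{i=1}^{M}\exp\Big\{\sum_{k=0}^{n-2}n_k(g_i)\alpha_k\Big\},
\end{equation*}
because $\psi(\alpha)$ is constant in $i$ and can be pulled out of the sum. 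Solving for $\psi(\alpha)$ yields $\psi(\alpha)=\log\sum_{i=1}^{M}\exp\{\sum_{k=0}^{n-2}n_k(g_i)\alpha_k\}$, which is the second expression in (\ref{eq:5}); the product form then follows by recalling $\alpha_k=\log\tilde p_k$, so that each summand equals $\prod_{k=0}^{n-2}\tilde p_k^{n_k(g_i)}$. Since $\mathcal{G}_n$ is finite, this sum is finite for every $\alpha\in\mathbb{R}^{n-1}$, so $\psi$ is well defined on the whole parameter space.

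Next I would pin down the two distinguished summands named in the statement. For the complete graph $K_n$ every node has degree $n-1$, so $n_{n-1}(K_n)=n$ while $n_k(K_n)=0$ for all $k\le n-2$; as $n_{n-1}$ is precisely the coordinate dropped when passing to the minimal representation, the exponent $\sum_{k=0}^{n-2}n_k(K_n)\alpha_k$ vanishes and the corresponding term is $1$ (equivalently $\prod_{k=0}^{n-2}\tilde p_k^{0}=1$). For the null graph every node has degree $0$, so $n_0=n$ and $n_k=0$ otherwise, and the term is $e^{n\alpha_0}=\tilde p_0^{\,n}$. Listing the remaining $M-2$ graphs in any order fills in the middle terms of (\ref{eq:5}).

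There is no real analytic difficulty here; the one place to be careful is the bookkeeping in the sum. The index $i$ ranges over \emph{labelled} graphs, so distinct labelled graphs that share a degree distribution contribute separate summands of equal value — the formula must be read with one term per $g_i$, not one term per realizable degree distribution. (Collapsing to isomorphism classes is possible, but then each term carries the multiplicity of its class, without changing the value of $\psi$.) Once this convention is fixed, the proposition is immediate from the two displayed computations above.
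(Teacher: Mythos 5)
Your proof is correct and takes essentially the same approach as the paper: both impose the normalization $\sum_{g\in\mathcal{G}_n}P(g)=1$ and solve for the normalizing constant, the only difference being that you work directly in the exponential-family form (\ref{eq:3}) while the paper first solves for $\phi(\tilde p)$ via the intermediate parametrization (\ref{eq:2}) and then converts to $\psi(\alpha)$. Your remark that the sum runs over labelled graphs (so isomorphic graphs contribute repeated terms) matches the paper's own clarification immediately following the proposition.
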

\begin{proof}
Assume that $g_1$ is the complete graph, $g_M$ is the null graph, and $g_i$ is an arbitrary graph with $n$ nodes.  Equation (\ref{eq:2}) implies that $P(g_1)=\phi(\tilde{p})/(1+\sum_{k=0}^{n-2}\tilde{p}_k)^n$, $P(g_M)=\tilde{p}_0^n\cdot\phi(\tilde{p})/(1+\sum_{k=0}^{n-2}\tilde{p}_k)^n$, and $P(g_i)=\prod_{k=0}^{n-2}\tilde{p}_k^{n_k(g_i)}\cdot\phi(\tilde{p})/(1+\sum_{k=0}^{n-2}\tilde{p}_k)^n$. Now by using $\sum_{i=1}^MP(g_i)=1$ we obtain $\phi(\tilde{p})=(1+\sum_{k=0}^{n-2}\tilde{p}_k)^n/(1+\dots+\prod_{k=0}^{n-2}\tilde{p}_k^{n_k(g_i)}+\dots+\tilde{p}_0^n)$. This, together with $\alpha_k=\log \tilde{p}_k$ and $\psi(\alpha)=n\log(1+\sum_{k=0}^{n-2}\exp(\alpha_k))-\log\phi(\tilde{p})$, implies (\ref{eq:5}).
\end{proof}
Notice that in (\ref{eq:5}) there are repeated terms for different labeling of
isomorphic graphs as well as non-isomorphic graphs with the same degree
distribution. We illustrate the proposition with an example; see also \cite{mckay:85}
for asymptotic estimates of the number of graphs in $\mathcal{G}_n$ with a
prescribed degree distribution.
\begin{example}\label{ex:1}
There are $4$ non-isomorphic graphs with $3$ nodes, where $g_2$ and $g_3$ are repeated three times for different labeling:
\begin{figure}[H]\label{fig:1}
\begin{tabular}{cccc}
  % after \\: \hline or \cline{col1-col2} \cline{col3-col4} ...
  \includegraphics[scale=0.065]{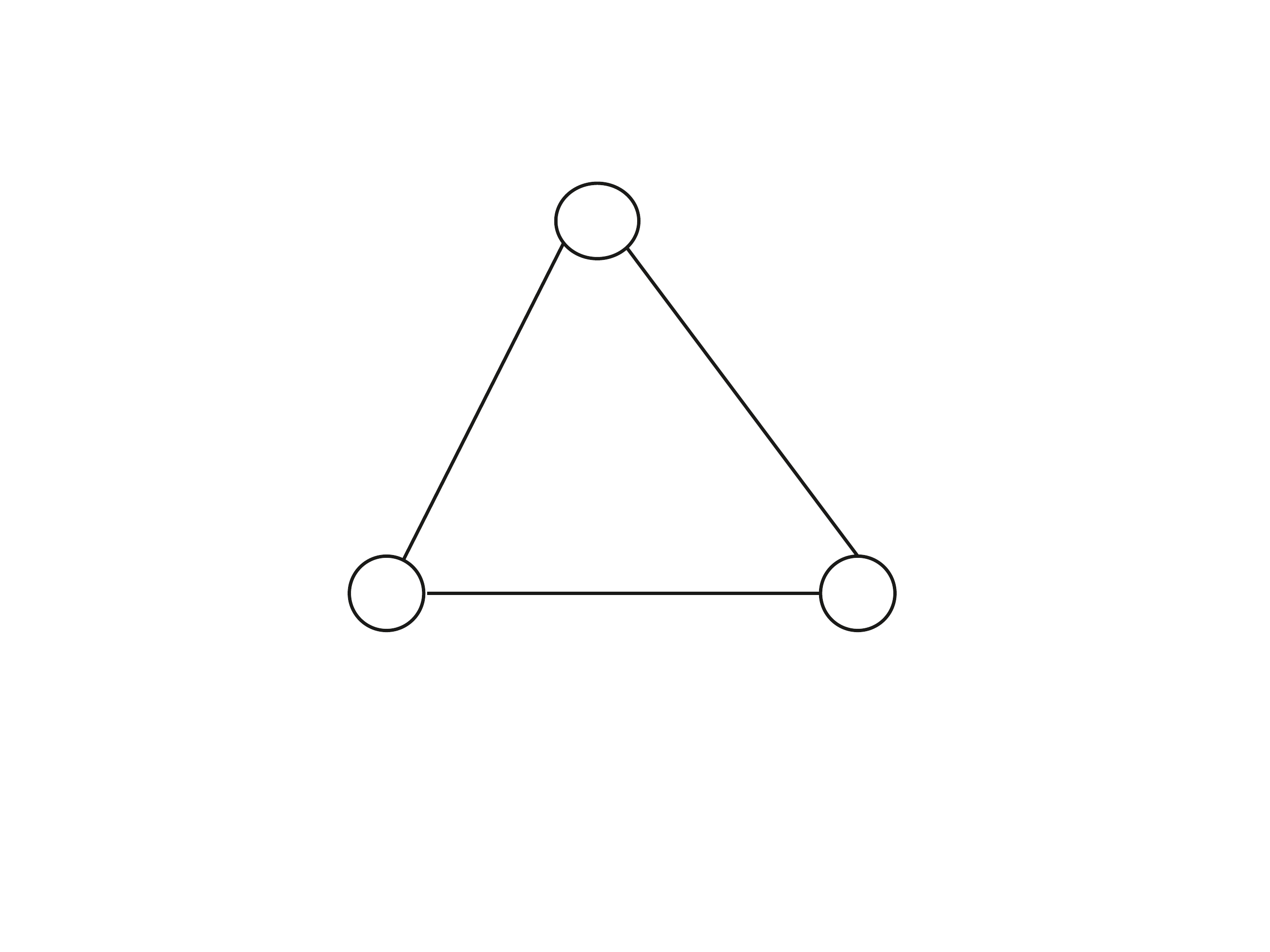}\nn\nn &\nn\nn \includegraphics[scale=0.065]{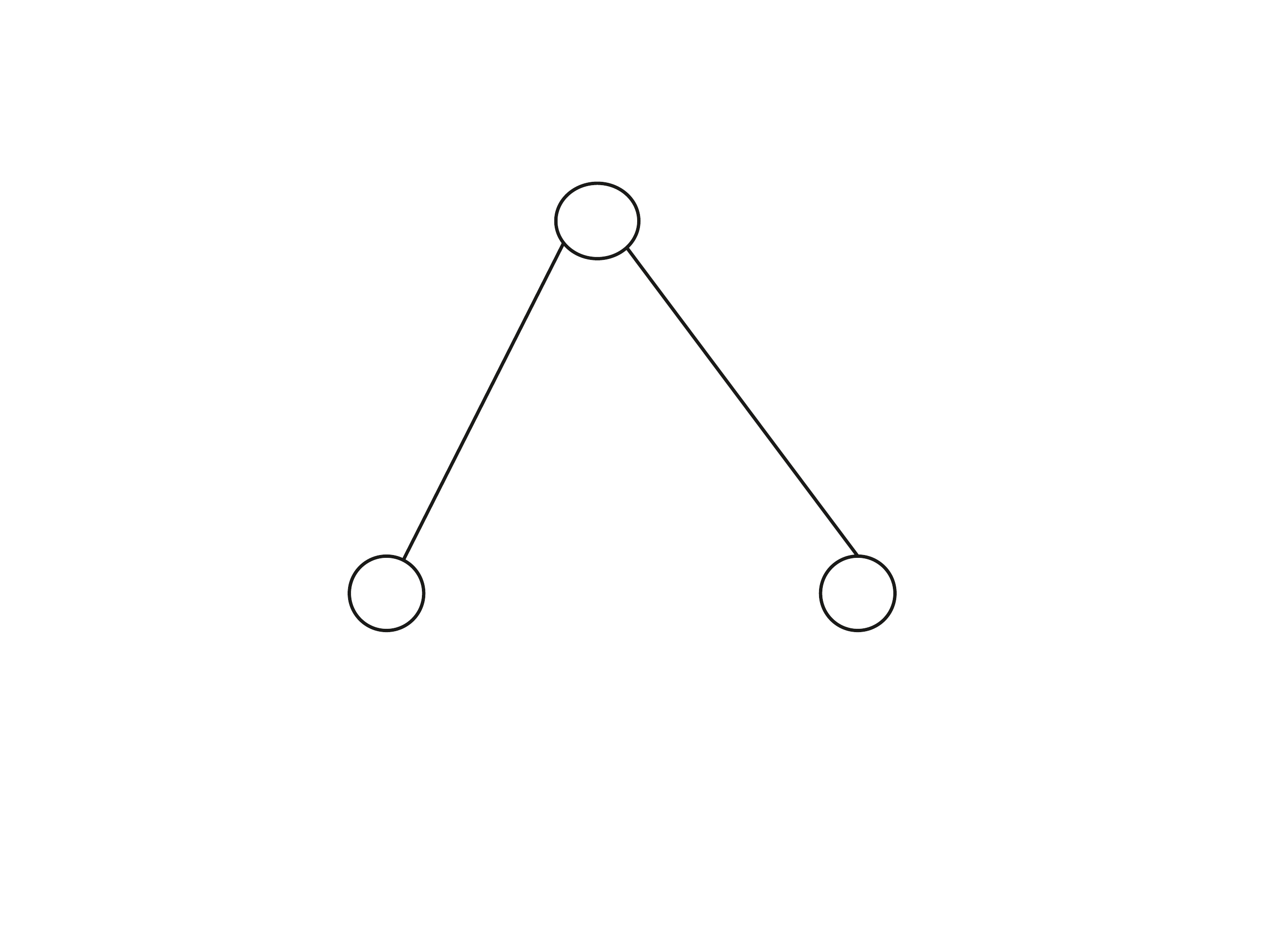}\nn\nn &\nn\nn \includegraphics[scale=0.065]{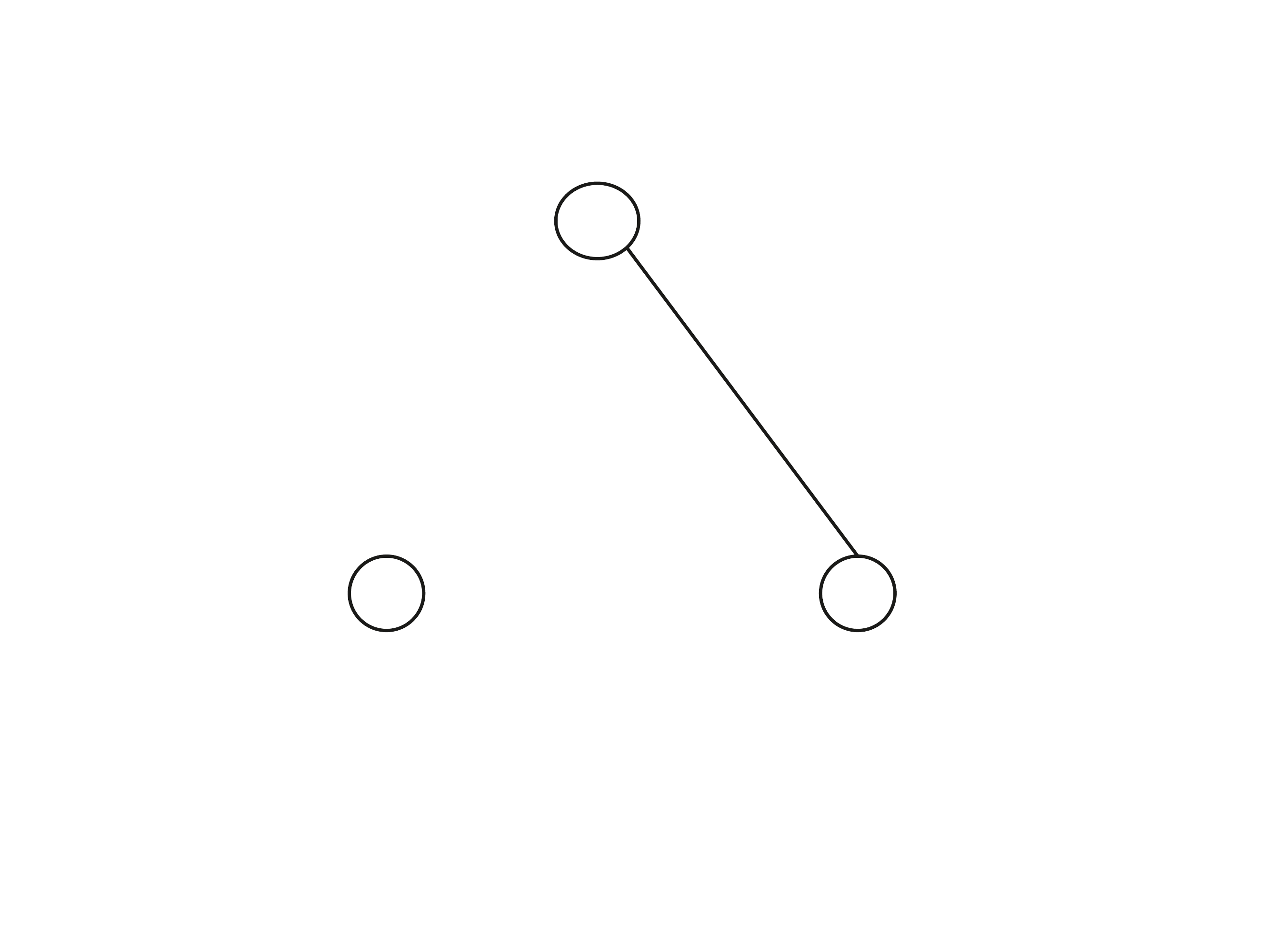} & \nn\nn \includegraphics[scale=0.065]{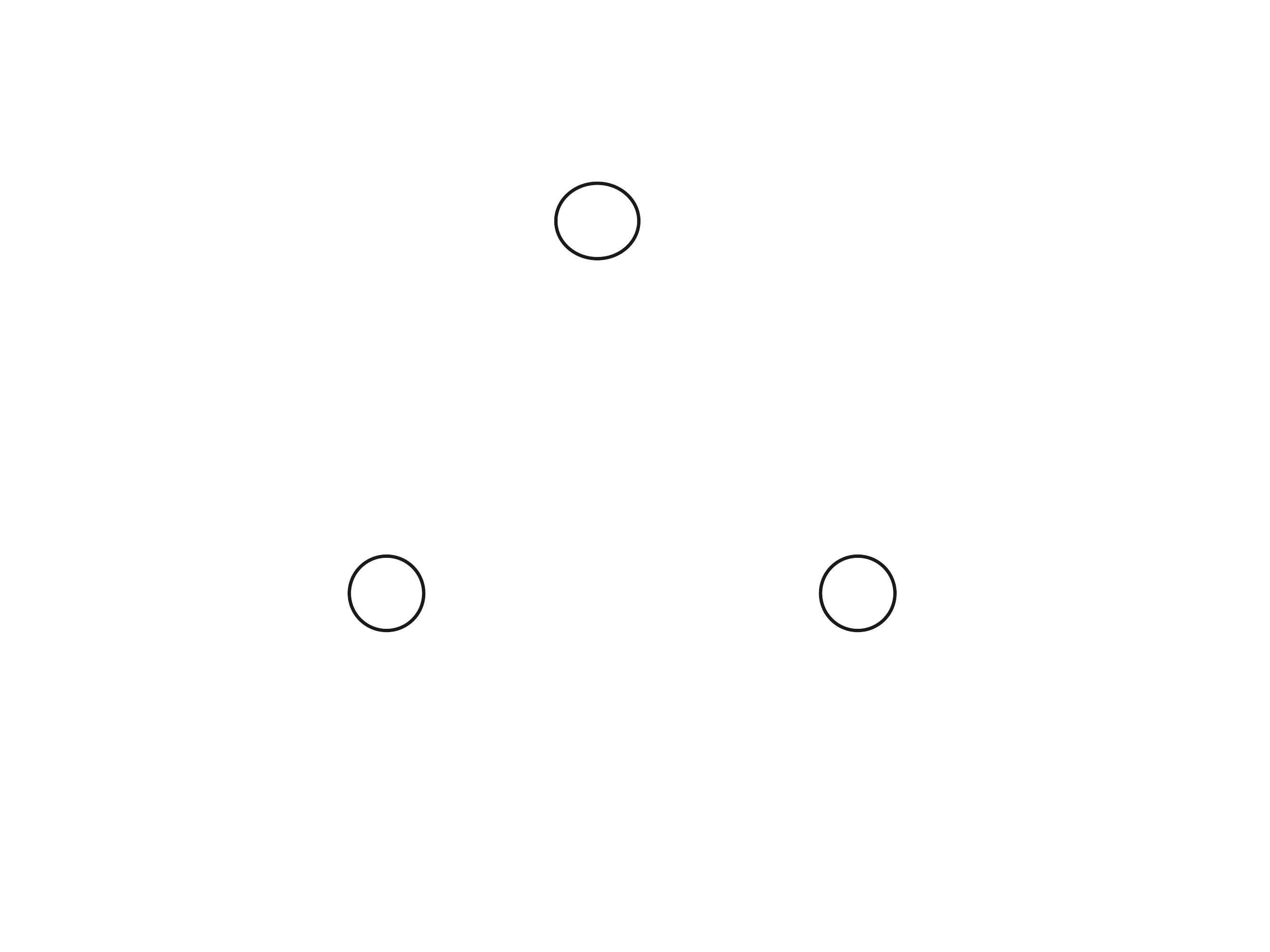}
  \\
   $g_1$\nn\nn &\nn\nn $g_2$ $(3)$\nn\nn &
\nn\nn  $g_3$ $(3)$\nn\nn &\nn\nn $g_4$
\end{tabular}
\caption{{\footnotesize All non-isomorphic graphs with $3$ nodes and the number of times they are repeated due to labeling.}}
\end{figure}
Now Proposition \ref{prop:1} implies
\begin{equation}\label{eq:4}
\psi(\alpha)=\log(1+3\tilde{p}_1^2+3\tilde{p}_1^2\tilde{p}_0+\tilde{p}_0^3)=\log(1+3e^{2\alpha_1}+3e^{2\alpha_1+\alpha_0}+e^{3\alpha_0}).
\end{equation}
This can be verified by considering (\ref{eq:2}), which implies that $P(g_1)=\phi(\tilde{p})/(1+\tilde{p}_0+\tilde{p}_1)^3$, $P(g_2)=\tilde{p}_1^2\cdot \phi(\tilde{p})/(1+\tilde{p}_0+\tilde{p}_1)^3$, $P(g_3)=\tilde{p}_1^2\cdot\tilde{p}_0\cdot\phi(\tilde{p})/(1+\tilde{p}_0+\tilde{p}_1)^3$, and $P(g_4)=\tilde{p}_0^3\cdot\phi(\tilde{p})/(1+\tilde{p}_0+\tilde{p}_1)^3$, and by following the calculations in the proof of Proposition \ref{prop:1}.
\end{example}

%Notice that there are algorithms in the graph theory and computer science literature that generate all non-isomorphic graphs with a given number of nodes; for example, see.

\paragraph{Degeneracy.} Heuristically, the $1K$ model should be regarded as relatively
unaffected by degeneracy issues, for at least two reasons. First, the change statistic \cite[see][]{Snijders2006New}
corresponding to adding an edge between two nodes of degrees $k$ and $k'$ is
$\alpha_{k+1} + \alpha_{k'+1} - (\alpha_k + \alpha_{k'})$, suggesting in general
a lack of
significant
correlation with the number of edges in the graph. Secondly, the growing (in $n$)
number of parameters and the fact that the degree distributions are
negatively correlated prevent the distribution from concentrating on very few
configurations as it pushes the probability mass to spread out.
%A more
%rigorous analysis of this issue, which cannot be tackled directly with the general
%results of \cite{cha13}, is beyond the scope of this paper.
\paragraph{Existence of the maximum likelihood estimator.} Suppose that $\{g_1,\dots,g_m\}$, $m\geq 1$ are $m$ iid observations of the networks with $n$ nodes. The \emph{log-likelihood} function can be written as
 \begin{equation}\label{eq:6}
l({\alpha|g_1,\dots,g_m})=(\sum_{k=0}^{n-2}\alpha_k\sum_{l=1}^mn_k(g_l))-m\psi(\alpha).
\end{equation}

We define the \emph{average observed sufficient statistics} for the $1$K model to be $\bar{n}^{(1)}=(\sum_{r=1}^mn^{(1)}(g_r))/m$ for $m$ iid observations $\{g_1,\dots,g_m\}$.  It is known that, for distributions in exponential families, the MLE exists if and only if the average observed sufficient statistics lie on the interior of the model polytope; see \cite{bar78} and \cite{bro86}.

We first explore the corresponding \emph{model polytope} $A_{n-1}=convhull(\{n^{(1)}_-(g),g\in\mathcal{G}_n\})$ to derive a necessary and sufficient condition for the existence of the maximum likelihood estimator (MLE) of $\alpha$, that is, $\{\hat{\alpha}:l(\hat{\alpha}|g_1,\dots,g_m)=\max_{\alpha\in\mathbb{R}}l(\alpha|g_1,\dots,g_m)\}$. We then exploit the well-studied theory of exponential family to calculate the MLE when existing.

In order to study $A_{n-1}$, we need the two following notations and lemmas: Denote the $k$-regular graphs with $n$ nodes by $R_k$. In addition, by $R_{kl}$ denote the graphs with $n$ nodes such that $n_{k}(R_{kl})=n-1$ and $n_{l}(R_{kl})=1$ .
\begin{lemma}\label{lem:1}
For $0<k<n$, there exists an $R_k$ if and only if $kn$ is even.
\end{lemma}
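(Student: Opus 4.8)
The plan is to prove the two implications separately: necessity is an immediate consequence of the handshake identity, and sufficiency follows from an explicit circulant construction, split according to the parity of $k$.

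For necessity, suppose $g\in\mathcal{G}_n$ is $k$-regular. Then $n_k(g)=n$ and $n_{k'}(g)=0$ for $k'\neq k$, so equation~(\ref{eq:00}) gives $e(g)=\tfrac{1}{2}kn$. Since $e(g)$ is a nonnegative integer, $kn$ must be even.

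For sufficiency, assume $kn$ is even and build a $k$-regular graph on the vertex set $\mathbb{Z}_n=\{0,1,\dots,n-1\}$. If $k=2j$ is even, join $i$ and $i'$ exactly when $i-i'\equiv \pm 1,\pm 2,\dots,\pm j \pmod n$. Because $0<k<n$ we have $j\leq\lfloor (n-1)/2\rfloor$, so the offsets $\pm 1,\dots,\pm j$ are $2j$ distinct nonzero residues modulo $n$; hence no loops or repeated edges arise and every vertex acquires degree exactly $2j=k$, giving a simple $k$-regular graph (the circulant graph on offsets $1,\dots,j$). If $k$ is odd, then since $kn$ is even, $n$ must be even; write $k=2j+1$, take the circulant graph on offsets $1,\dots,j$ as in the even case (a $2j$-regular graph), and add the perfect matching consisting of the edges $\{i,\,i+n/2\}$ for $0\leq i<n/2$. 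Each matching edge raises the degree of both endpoints by $1$, so the result is $(2j+1)$-regular; it stays simple because $j=(k-1)/2\leq (n-2)/2<n/2$, so a matching edge joins two vertices at cyclic distance $n/2>j$ and is therefore not already present in the circulant graph.

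There is no genuine obstacle in this argument; the only points needing care are verifying that the offsets $\pm 1,\dots,\pm j$ are distinct nonzero residues when $k<n$ (so that each vertex really has degree $k$ with no loops or multi-edges), and, in the odd case, checking via the strict inequality $j<n/2$ that the added matching is edge-disjoint from the circulant graph. Both are immediate from the hypotheses $0<k<n$ and the parity of $kn$.
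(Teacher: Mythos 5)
Your proof is correct. The paper itself states Lemma~\ref{lem:1} without proof, treating it as the classical existence criterion for $k$-regular graphs on $n$ vertices, so there is no in-paper argument to compare against; what you have supplied is the standard self-contained justification. Your necessity direction via the handshake identity (equivalently, equation~(\ref{eq:00})) is exactly right, and your sufficiency construction --- the circulant graph on $\mathbb{Z}_n$ with offsets $\pm 1,\dots,\pm j$ for even $k=2j$, augmented by the perfect matching $\{i,\,i+n/2\}$ when $k$ is odd (which forces $n$ even) --- is the textbook construction. You correctly verify the two points where such an argument could go wrong: the offsets are $2j$ distinct nonzero residues because $j<n/2$ (in particular no offset coincides with its own negative, since that would require $j\geq n/2$), and the matching is edge-disjoint from the circulant part because $j\leq (n-2)/2<n/2$. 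One small remark: this lemma is what the paper's proof of Lemma~\ref{lem:2} silently relies on (``we know that there is a $k$-regular graph $H$ with $n-1$ nodes''), so your explicit construction actually fills a gap the paper leaves open rather than duplicating work done there.
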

\begin{lemma}\label{lem:2}
Suppose that $n$ and $k$ are odd numbers and $0\leq k,l<n$. There exists an $R_{kl}$ if and only if $l$ is even.
\end{lemma}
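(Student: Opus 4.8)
The plan is to establish the two implications separately: necessity is a handshake–parity count, and sufficiency is an explicit construction. For necessity, suppose an $R_{kl}$ exists. Summing degrees over its $n$ nodes gives $(n-1)k+l=2e(R_{kl})$, which is even. Since $n$ is odd, $n-1$ is even, so $(n-1)k$ is even no matter what $k$ is; hence $l$ must be even.

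For sufficiency, assume $l$ is even. I first record that the standing hypotheses ($n,k$ odd, $0\le k,l<n$) force $1\le k\le n-2$, $0\le l\le n-1$, and that $m:=n-1$ is even. I would realize $R_{kl}$ on the vertex set $\mathbb{Z}_m\cup\{v\}$ as follows. On $\mathbb{Z}_m$ place the circulant graph with connection set $\{\pm1,\pm2,\dots,\pm\tfrac{k-1}{2}\}\cup\{\tfrac{m}{2}\}$; since $\tfrac{k-1}{2}<\tfrac{m}{2}$, this graph is $k$-regular, and its $\tfrac{m}{2}$ antipodal pairs $\{\,i,\,i+\tfrac{m}{2}\,\}$ form a perfect matching of it. Next delete $\tfrac{l}{2}$ of these matching edges (legitimate because $\tfrac{l}{2}\le\tfrac{m}{2}$): the $l$ endpoints involved drop to degree $k-1$, while every other vertex of $\mathbb{Z}_m$ keeps degree $k$. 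Finally join $v$ to exactly those $l$ vertices of degree $k-1$. In the resulting graph $v$ has degree $l$ and every vertex of $\mathbb{Z}_m$ has degree $k$, so it is an $R_{kl}$, as required. The degenerate cases $k=1$ (where the circulant is itself a perfect matching) and $l\in\{0,m\}$ go through verbatim; and note that the hypotheses exclude $l=k$, in agreement with Lemma~\ref{lem:1}, since no $k$-regular graph exists on the odd number $n$ of nodes.

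The one point needing care is producing, on the even vertex set $\mathbb{Z}_m$, a $k$-regular graph that actually carries a perfect matching — an arbitrary $k$-regular graph need not — and the circulant above supplies the graph and the matching simultaneously, after which everything is pure degree bookkeeping. A less explicit alternative for sufficiency would be to verify the Erd\H{o}s--Gallai inequalities for the degree sequence consisting of $n-1$ copies of $k$ and a single copy of $l$; I would avoid that route, as it is messier than the direct construction.
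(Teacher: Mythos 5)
Your proof is correct and follows essentially the same route as the paper's: necessity via the handshake parity count, and sufficiency by taking a $k$-regular graph on the $n-1$ remaining vertices together with a perfect matching and then trading matching edges for edges to the extra vertex. The only substantive difference is that the paper simply asserts a perfect matching of ``a $k$-regular graph with $n-1$ nodes'' without justification (and an arbitrary $k$-regular graph on an even number of vertices need not have one), whereas your circulant supplies the $k$-regular graph and the matching simultaneously, so your write-up closes a small gap in the paper's argument.
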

\begin{proof}
Suppose that $l$ is even. We know that there is a $k$-regular graph $H$ with $n-1$ nodes. For this graph, consider a matching $\{(i_1,j_1),\dots,(i_{(n-1)/2},j_{(n-1)/2}\}$. Now $H$ and an isolated node $h$ provides $R_{k0}$. By removing the edge between $i_1$ and $j_1$ and connecting $h$ to both $i_1$ and $j_1$, we obtain $R_{k2}$. By this method, we can inductively generate all $R_{kl}$.

If $l$ is odd, the sum of degrees of nodes would be an odd number, which is impossible.
\end{proof}
Let $B_n=convhull(\{n^{(1)}(g),g\in\mathcal{G}_n\})$. In addition, for $0\leq k<n$, let $e_k=(0,\dots,0,n,0,\dots,0)$, where $n$ is the $(k+1)$st element of the vector of length $n$, and for $0\leq k,l<n$ and $k\neq l$, let $e_{kl}=(0,\dots,0,1,0,\dots,0,n-1,0,\dots,0)$, where $n-1$ is the $(k+1)$st element of the vector and $1$ is the $(l+1)$st element of the vector of length $n$. We observer that $n^{(1)}(R_k)=e_k$ and $n^{(1)}(R_{kl})=e_{kl}$. The following lemma characterizes $B_n$; see also
\begin{lemma}\label{lem:3}
The model polytope $B_n$ is
\begin{description}
  \item[(if $n$ is even)] $n\triangle^{n-1}$, that is, the $(n-1)$-simplex scaled by $n$; and
  \item [(if $n$ is odd)] the convex hall of the set of extreme points $\{e_{l}:0\leq k<n, \text{l even}\}\cup\{e_{k,l}: 0\leq k<n, 0\leq l<n,\text{k odd, l even}\}$.
\end{description}
\end{lemma}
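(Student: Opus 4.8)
\emph{Proof idea.}
The plan is to handle both cases by the same recipe. First observe that $B_n\subseteq n\triangle^{n-1}$, since every $n^{(1)}(g)$ is a nonnegative integer vector whose entries sum to $n$; the vertices of $n\triangle^{n-1}$ are $e_0,\dots,e_{n-1}$. The proof then comes down to (i) identifying the linear inequalities that cut $B_n$ out of $n\triangle^{n-1}$, (ii) reading off the resulting vertices, and (iii) checking that each such vertex is $n^{(1)}(g)$ for an explicit graph $g$ supplied by Lemmas~\ref{lem:1}--\ref{lem:2}; convexity of $B_n$ then forces equality.

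\emph{The even case.} Here nothing needs to be cut. For $0<k<n$, Lemma~\ref{lem:1} produces a $k$-regular graph $R_k$ (as $kn$ is even), so $e_k=n^{(1)}(R_k)\in B_n$; and $e_0=n^{(1)}$ of the null graph. Thus $B_n$ is a convex set that contains all vertices of $n\triangle^{n-1}$ and is contained in it, so $B_n=n\triangle^{n-1}$.

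\emph{The odd case.} The one nontrivial ingredient is a parity constraint: $\sum_k k\,n_k(g)=2e(g)$ is even while $\sum_k n_k(g)=n$ is odd, so a graph on an odd number of nodes has at least one node of even degree, i.e.\ $\sum_{k\text{ even}}n_k(g)\ge 1$. Hence $B_n\subseteq Q$, where
\[
Q:=\Bigl\{x\in\mathbb R^n:\ x\ge 0,\ \sum_{k}x_k=n,\ \sum_{k\text{ even}}x_k\ge 1\Bigr\}.
\]
I would then enumerate $\mathrm{vert}(Q)$ by viewing $Q$ as $n\triangle^{n-1}$ intersected with the half-space $\{\sum_{k\text{ even}}x_k\ge 1\}$, whose bounding hyperplane contains no vertex $e_m$ (there $\sum_{k\text{ even}}x_k$ equals $n$ or $0$). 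By the standard vertex description of a half-space cut of a polytope, $\mathrm{vert}(Q)$ consists of the surviving simplex vertices $e_l$ with $l$ even, together with the points where an edge $[e_i,e_j]$ of $n\triangle^{n-1}$ crosses the hyperplane; such an edge crosses iff exactly one of $i,j$ is even, and writing $k$ for the odd one and $l$ for the even one, the crossing point carries mass $n-1$ on coordinate $k$ and mass $1$ on coordinate $l$, i.e.\ it is $e_{kl}$. This is exactly the extreme-point set in the statement, so $Q=\mathrm{conv}(\mathrm{vert}(Q))$. Finally, $e_l$ with $l$ even is $n^{(1)}$ of an $l$-regular graph (Lemma~\ref{lem:1}, since $ln$ is even), or of the null graph when $l=0$; and $e_{kl}$ with $k$ odd, $l$ even is $n^{(1)}(R_{kl})$, with $R_{kl}$ existing by Lemma~\ref{lem:2} because $n$ and $k$ are odd and $l$ is even. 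Therefore $Q=\mathrm{conv}(\mathrm{vert}(Q))\subseteq B_n\subseteq Q$, giving $B_n=Q$ with precisely the listed vertices.

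\emph{The main obstacle.} Everything here is routine once the parity inequality $\sum_{k\text{ even}}x_k\ge 1$ is spotted and recognized as the only constraint (beyond $x\ge 0$ and $\sum_k x_k=n$) separating $B_n$ from $n\triangle^{n-1}$; the rest is an application of Lemmas~\ref{lem:1}--\ref{lem:2} and standard cut-polytope bookkeeping. If one prefers to avoid quoting the latter, the inclusion $B_n\subseteq\mathrm{conv}(\{e_l:l\text{ even}\}\cup\{e_{kl}:k\text{ odd},l\text{ even}\})$ can instead be proved directly: for each odd $k$ put total weight $n_k(g)/(n-1)$ on the $e_{k\cdot}$ generators and the leftover even-coordinate mass on the $e_l$ generators, which is feasible as a transportation problem exactly because $\sum_{k\text{ odd}}n_k(g)\le n-1$.
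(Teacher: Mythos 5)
Your proof is correct, and for the odd case it takes a genuinely different and in fact tighter route than the paper's. The paper argues directly at the level of extreme points: it exhibits the graphs $R_l$ and $R_{kl}$ realizing the claimed vertices and then asserts that the only integer points of $n\triangle^{n-1}$ falling outside their convex hull are those with an entry equal to $n$, which is dismissed via Lemma~\ref{lem:1}. That last assertion is actually too weak as stated (e.g.\ a point with all of its mass spread over several odd-indexed coordinates also lies outside the hull yet has no entry equal to $n$), so the paper's containment $B_n\subseteq\mathrm{conv}(\{e_l\}\cup\{e_{kl}\})$ is left with a gap. You close exactly that gap by isolating the parity inequality $\sum_{k\text{ even}}x_k\ge 1$ (at least one node of even degree when $n$ is odd), showing $B_n$ sits inside the half-space cut $Q$ of the simplex, and then reading off $\mathrm{vert}(Q)$ from the standard description of a simplex truncated by one hyperplane: surviving vertices $e_l$ ($l$ even) plus edge crossings, which land precisely at the points $e_{kl}$ with $k$ odd, $l$ even. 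Combined with the realizability of each vertex via Lemmas~\ref{lem:1}--\ref{lem:2}, this gives $Q\subseteq B_n\subseteq Q$ and, as a bonus, certifies that the listed points are exactly the vertices of $B_n$ rather than merely a generating set. The even case is handled identically in both arguments. The one caveat is the implicit assumption $n\ge 3$ so that the bounding hyperplane misses the simplex vertices, which is harmless here.
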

\begin{proof}
$\sum_{k=0}^{n-1}n_{k}(g)=n$ implies that $A_{n-1}\subseteq n\triangle^{n-1}$. We now need to deal with the two cases separately:

1) \emph{$n$ even:} By lemma \ref{lem:1}, we know that  for each $0\leq k<n$, there exists an $R_k$. The corresponding vectors $e_k$  are all the vertices of $A_{n-1}$; therefore, $A_{n-1}=n\triangle^{n-1}$.

2) \emph{$n$ odd:} For $0\leq l<n$ and $l$ even, by lemma \ref{lem:1}, there exist $R_l$, and the corresponding $e_l$ are the extreme points. For $0\leq k,l<n$, odd $k$ and even $l$, by Lemma \ref{lem:2}, there exist $R_{kl}$, and it is easy to see that the corresponding $e_{kl}$ cannot be generated by the convex combination of other $n^{(1)}(g)$. Therefore, these are the extreme points too. Now only the vectors with integer entries fewer then $n$ that lie on $n\triangle^{n-1}$ but not on the convex combination of extreme points contain an element $n$ as an entry, which, again by Lemma \ref{lem:1}, it is not possible.
\end{proof}
%This lemma can also be phrased with respect to the set of extremal graphs for $1$K model, which is
%$\{R_{l}:0\leq l<n, \text{l even}\}\cup\{R_{kl}: 0\leq k<n, 0\leq l<n,\text{k odd, l even}\}$.

Let $\mathbf{0}_{i}$ be the vector of size $i$ consisting only of zero elements. We can now characterize $A_{n-1}$:
\begin{prop}\label{prop:2}
The model polytope $A_{n-1}$ is
\begin{description}
  \item[(if $n$ is even)] the convex hall of the set of extreme points $\{e_{k}:0\leq k<n-1\}\cup\{\mathbf{0}_{n-2}\}$; and
  \item [(if $n$ is odd)] the convex hall of the set of extreme points $\{e_{l}:0\leq l<n-1, \text{l even}\}\cup\{e_{k,l}: 0\leq k<n-1, 0\leq l<n-1,\text{k odd, l even}\}\cup\{\mathbf{0}_{n-2}\}$.
\end{description}
\end{prop}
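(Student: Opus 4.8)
The plan is to exhibit $A_{n-1}$ as a linear image of the polytope $B_n$ from Lemma~\ref{lem:3} and then read off its vertices. Let $\pi\colon\mathbb{R}^{n}\to\mathbb{R}^{n-1}$ be the coordinate projection that deletes the last entry, the one recording the number of nodes of degree $n-1$. By the definition of $n^{(1)}_-$ we have $n^{(1)}_-(g)=\pi(n^{(1)}(g))$ for every $g\in\mathcal{G}_n$, and since an affine map sends convex hulls to convex hulls, $A_{n-1}=\pi(B_n)$. Consequently every vertex of $A_{n-1}$ is the $\pi$-image of some vertex of $B_n$, so it suffices to push the extreme points supplied by Lemma~\ref{lem:3} through $\pi$ and then prune the list to the genuinely extreme ones. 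The pruning can be done with an elementary support argument: all points $n^{(1)}_-(g)$ have nonnegative coordinates, so if a point $x\in A_{n-1}$ is a convex combination of other points of $A_{n-1}$, each point used must be supported on $\mathrm{supp}(x)$, which quickly forces the relevant candidates to be vertices.

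For even $n$, Lemma~\ref{lem:3} gives $B_n=n\triangle^{n-1}$, and $\pi$ maps $n\triangle^{n-1}=\{x\ge 0:\mathbf{1}^{\top}x=n\}$ onto the truncated simplex $\{y\ge 0:\mathbf{1}^{\top}y\le n\}$, whose vertices are the zero vector together with $n\hat{e}_{1},\dots,n\hat{e}_{n-1}$, i.e.\ $\mathbf{0}$ and $e_{0},\dots,e_{n-2}$. Each is attained by an actual graph: $\mathbf{0}=n^{(1)}_-(K_n)$, and $e_{k}=n^{(1)}_-(R_k)$ with $R_k$ existing by Lemma~\ref{lem:1} since $kn$ is even when $n$ is even. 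This settles the even case.

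For odd $n$, I would apply $\pi$ to the extreme points of $B_n$ listed in Lemma~\ref{lem:3}. A vertex $e_{l}$ with $l$ even maps to $e_{l}$ if $l<n-1$, and to $\mathbf{0}$ if $l=n-1$ (here $n-1$ is even and $e_{n-1}=n^{(1)}(K_n)$). A vertex $e_{k,l}$ with $k$ odd and $l$ even maps to $e_{k,l}$ if $l<n-1$, whereas if $l=n-1$ the ``$1$'' occupies the deleted coordinate and the image is the single-support vector $\tfrac{n-1}{n}e_{k}$. So $A_{n-1}$ is the convex hull of $\mathbf{0}$, of the $e_{l}$ with $l<n-1$ even, of the $e_{k,l}$ with $k,l<n-1$ ($k$ odd, $l$ even), and of the points $\tfrac{n-1}{n}e_{k}$ with $k<n-1$ odd; the support argument then shows which of these are extreme. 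The main obstacle is exactly this bookkeeping: one must keep careful track of the vertices of $B_n$ that involve degree $n-1$ — the coordinate that the minimal parametrization suppresses — because these are precisely the ones whose projections behave differently from the generic case and determine the correct vertex list. Everything else is a direct translation of Lemmas~\ref{lem:1}--\ref{lem:3} through the identity $A_{n-1}=\pi(B_n)$.
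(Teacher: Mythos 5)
Your overall strategy --- realize $A_{n-1}$ as the image of $B_n$ under the coordinate projection $\pi$ deleting the degree-$(n-1)$ coordinate, push the vertices from Lemma~\ref{lem:3} through $\pi$, and prune --- is exactly the paper's proof, which consists of the single sentence ``by projecting $B_n$ we obtain the result.'' Your even case is complete and correct. The problem is in the odd case, and it sits precisely at the step you defer with ``the support argument then shows which of these are extreme.'' You correctly observe that for odd $k$ the point $e_{k,n-1}$ is a legitimate extreme point of $B_n$ (since $n-1$ is even when $n$ is odd; it is realized by the graph $R_{k,n-1}$ of Lemma~\ref{lem:2}) and that it projects to $\tfrac{n-1}{n}e_k$, the vector with $n-1$ in the coordinate indexed by degree $k$ and zeros elsewhere. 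If you actually run your own support argument on this point, it comes out \emph{extreme}: any convex combination of nonnegative points of $A_{n-1}$ equal to it can only use points supported on that single coordinate, and for odd $k$ the only such point in the proposition's claimed list is $\mathbf{0}_{n-2}$. So $\tfrac{n-1}{n}e_k$ lies in $A_{n-1}$ but not in the convex hull of the listed extreme points, and the statement you are asked to prove is false as written for odd $n$.

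Concretely, for $n=3$ the path $P_3$ ($g_2$ of Figure 2) has $n^{(1)}_-=(n_0,n_1)=(0,2)$, which is not a convex combination of $e_0=(3,0)$, $e_{1,0}=(1,2)$ and $(0,0)$: matching the second coordinate forces the coefficient of $(1,2)$ to equal $1$, contradicting the first coordinate. Hence $A_2$ is the quadrilateral with vertices $(0,0),(3,0),(1,2),(0,2)$, not the triangle the proposition asserts. The odd case must be amended to include the points $\pi(e_{k,n-1})$ for odd $k<n-1$; note that condition (ii) of Theorem~\ref{thm:1}, namely $\bar{n}^{(1)}_k<n-1$ for odd $k$, is exactly the facet inequality these extra vertices create, so the later results implicitly use the corrected polytope. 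Your method is the right one; the gap is that you stopped before the pruning step, and completing it honestly yields a different vertex list than the one stated.
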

\begin{proof}
By projecting the polytope $B_n$, given in Lemma \ref{lem:3}, onto the $(n-1)$-dimensional Euclidean space with coordinates $(n_0,\dots n_{n-2})$, we obtain the result.
\end{proof}
In Figure 3, the polytopes $B_3$ and $A_3$, for graphs with three nodes, are depicted.

\begin{figure}[H]\label{fig:4}
\begin{tabular}{cc}
  % after \\: \hline or \cline{col1-col2} \cline{col3-col4} ...
  \includegraphics[scale=0.15]{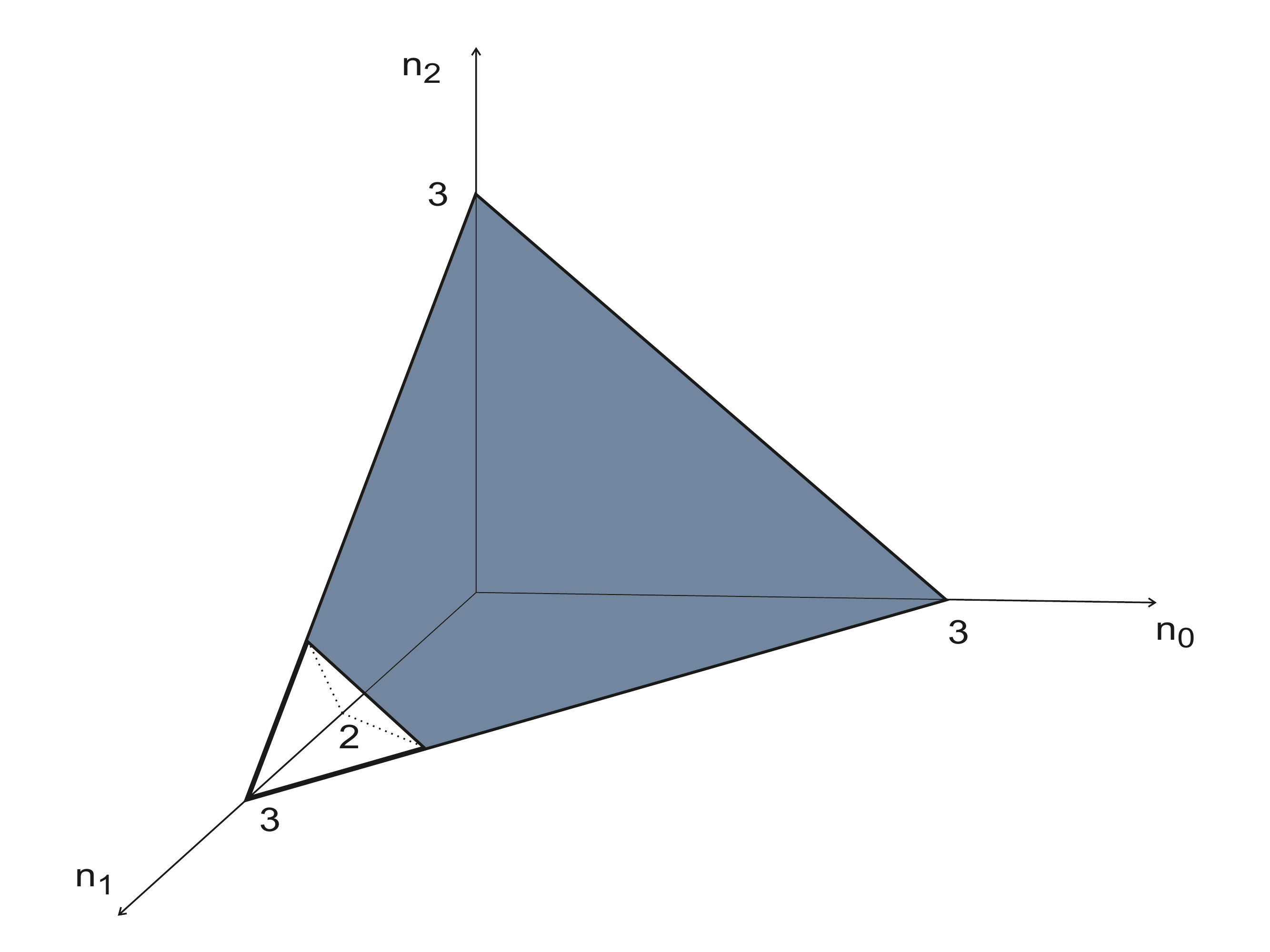}\nn\nn\nn & \nn\nn\nn\includegraphics[scale=0.17]{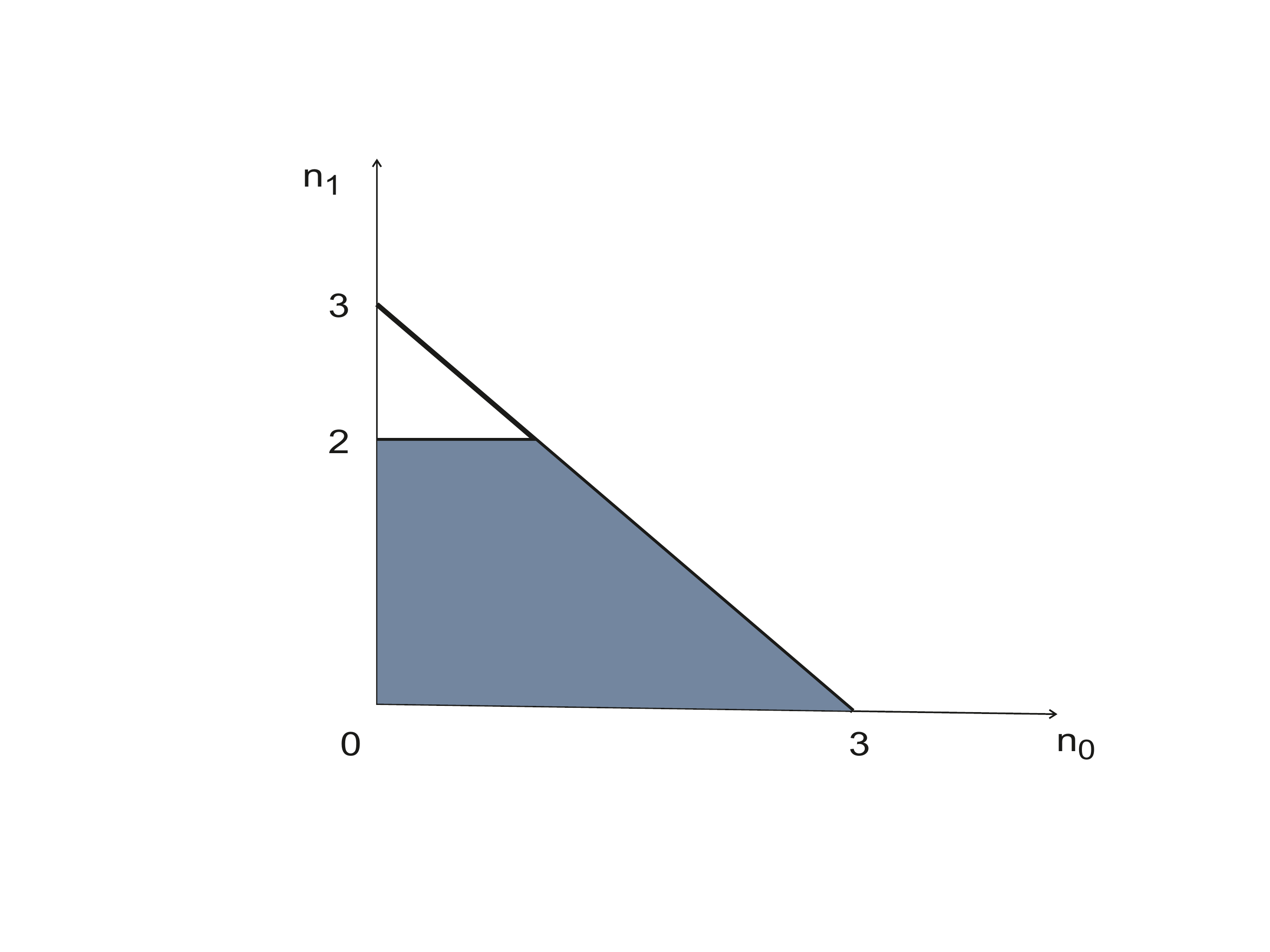}
  \\
   $(a)$\nn\nn\nn &\nn\nn\nn $(b)$
\end{tabular}
\caption{{\footnotesize (a) The polytope $B_3$. (b) The polytope $A_3$. }}\label{fig:4n}
\end{figure}

Therefore, we have the following theorem:
\begin{theorem}\label{thm:1}
For the $1$K model and $m$ iid observations $\{g_1,\dots,g_m\}$, the MLE exists if and only if
\begin{description}
  \item[(if $n$ is even)] $\bar{n}^{(1)}_k\neq 0$ for all $k$, $0\leq k\leq n-2$, and $\sum_{k=0}^{n-2}\bar{n}^{(1)}_k<n$; and
  \item [(if $n$ is odd)] (i) $\bar{n}^{(1)}_r\neq 0$ for all $r$, $0\leq r\leq n-2$, and $\sum_{r=0}^{n-2}\bar{n}^{(1)}_r<n$; and (ii) for every odd $k$, $\bar{n}^{(1)}_k<n-1$.
  %there exists an $r$, $1\leq r\leq m$, such that $n^{(1)}(g_r)\neq e_{k,l}$ for all odd $k$, $0\leq k\leq n-2$, and even $l$, $0\leq l\leq n-2$.
\end{description}
\end{theorem}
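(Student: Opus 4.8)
The plan is to reduce everything to the convex geometry of the model polytope $A_{n-1}$ via the exponential-family criterion quoted above: for the $1$K model in its minimal form (\ref{eq:3}), the MLE $\hat\alpha$ exists (and is then unique) if and only if the average observed sufficient statistic $\bar{n}^{(1)}_-$ lies in the interior of $A_{n-1}$. Two reductions come for free. First, since $\sum_{k=0}^{n-1}n_k(g)=n$ with the $n_k(g)$ nonnegative integers, every $\bar{n}^{(1)}_-$ automatically satisfies $\bar{n}^{(1)}_k\ge 0$ for all $k$ and $\sum_{k=0}^{n-2}\bar{n}^{(1)}_k\le n$, so the content of the theorem is really about avoiding the boundary; and because $\bar{n}^{(1)}_k$ is an average of nonnegative integers, $\bar{n}^{(1)}_k>0$ is the same as $\bar{n}^{(1)}_k\ne 0$. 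Second, $A_{n-1}$ is full-dimensional in $\mathbb{R}^{n-1}$ (for $n$ even it is a scaled simplex; for $n$ odd the extreme points in Proposition \ref{prop:2} already span $\mathbb{R}^{n-1}$ affinely, the $e_l$ supplying the even coordinate directions and the $e_{kl}$ the odd ones), so ``interior'' is the ordinary topological interior, i.e.\ the set of points satisfying every facet-defining inequality of $A_{n-1}$ strictly. The whole proof thus amounts to passing from the vertex description of $A_{n-1}$ in Proposition \ref{prop:2} to its facet description.

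For $n$ even this is immediate: Proposition \ref{prop:2} identifies $A_{n-1}$ with the convex hull of $\mathbf{0}_{n-2}$ and the scaled basis vectors $e_0,\dots,e_{n-2}$, i.e.\ with the scaled simplex $\{x\in\mathbb{R}^{n-1}:x_k\ge 0,\ \sum_{k=0}^{n-2}x_k\le n\}$, whose facets are exactly the hyperplanes $\{x_k=0\}$, $0\le k\le n-2$, together with $\{\sum_{k=0}^{n-2}x_k=n\}$; requiring all of them to be strict is precisely the displayed even-$n$ condition. For $n$ odd the polytope is no longer a simplex, and alongside the hyperplanes $\{x_k=0\}$ and $\{\sum_k x_k=n\}$ there is, for each odd $k$, the extra valid inequality $x_k\le n-1$: by Lemma \ref{lem:1} there is no $k$-regular graph when $k$ is odd, so no vertex of $A_{n-1}$ has $x_k=n$, while $x_k=n-1$ is attained at the vertices coming from the graphs $R_{kl}$ of Lemma \ref{lem:2}. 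I would then need to argue that these inequalities, and no others, cut out $A_{n-1}$: that each is facet-defining, which one certifies by exhibiting $n-2$ affinely independent vertices of Proposition \ref{prop:2} on which it is tight (for $\{x_k=n-1\}$, the $e_{kl}$ with $l$ even, together with the remaining extreme points in that hyperplane), and that any integer point of $\{x\ge 0,\ \sum_k x_k\le n\}$ outside their common intersection would need a coordinate equal to $n$ in an even slot or to $n-1$ in an odd slot, which Lemmas \ref{lem:1}--\ref{lem:2} forbid. Passing to strict inequalities then gives conditions (i) and (ii) of Theorem \ref{thm:1}.

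The step I expect to be the real obstacle is the completeness of this facet list for $n$ odd. Validity of the nonnegativity constraints, the total-mass constraint, and the bounds $x_k\le n-1$ for odd $k$ is clear, but showing that nothing further is active --- in particular that one need not also include inequalities coupling several odd coordinates --- and that each $\{x_k=n-1\}$ genuinely has dimension $n-2$ rather than being a lower-dimensional face, requires a careful inventory of the extreme points of $A_{n-1}$ and is exactly where the parity arithmetic of Lemmas \ref{lem:1} and \ref{lem:2} must be used in an essential way. Once the $H$-representation of $A_{n-1}$ is pinned down, both cases of the theorem follow by inspection.
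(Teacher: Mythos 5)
Your framework---reduce MLE existence to $\bar{n}^{(1)}_-\in \mathrm{int}(A_{n-1})$ and then pass from the vertex description of Proposition \ref{prop:2} to a facet description---is legitimate, and your even case is correct and essentially equivalent to the paper's (the paper instead writes an interior point as a strictly positive convex combination of the vertices, which for a simplex amounts to the same computation). The problem is the odd case, where you explicitly defer the one step that carries all the content, namely that nonnegativity, $\sum_k x_k\le n$, and $x_k\le n-1$ for each odd $k$ form a \emph{complete} list of valid inequalities. That step cannot be completed as stated, because the list is genuinely incomplete: by the handshake lemma the number of odd-degree vertices of any graph is even, hence at most $n-1$ when $n$ is odd, so $\sum_{k\,\mathrm{odd}} n_k(g)\le n-1$ for every $g$ and therefore $\sum_{k\,\mathrm{odd}} x_k\le n-1$ is a valid inequality for $A_{n-1}$ that your three families do not imply. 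For $n=5$ the point $(\epsilon,\,2.4,\,\epsilon,\,2.4)$ with small $\epsilon>0$ satisfies every strict inequality you propose yet lies outside $A_4$. So the ``inequalities coupling several odd coordinates'' that you flag as a possible worry are really there, and your proposed $H$-representation is wrong, not merely unverified. (Separately, your completeness heuristic of inspecting integer points is not the right test: equality of an $H$-polytope with a $V$-polytope is a statement about the vertices of the $H$-polytope, which need not be lattice points you have already listed.)

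This is not a repairable technicality of your write-up alone: executing your plan correctly forces condition (ii) into the aggregate form $\sum_{k\,\mathrm{odd}}\bar{n}^{(1)}_k<n-1$, which is strictly stronger than the coordinatewise bound in the theorem. Indeed, averaging the three $5$-node graphs consisting of a $2$-path plus a disjoint edge ($n_1=4$, $n_2=1$), a hub joined to a $4$-cycle ($n_3=4$, $n_4=1$), and $K_4$ plus an isolated node ($n_0=1$, $n_3=4$) gives $\bar{n}=(1/3,4/3,1/3,8/3,1/3)$, which satisfies (i) and (ii) as stated but has $\bar{n}_1+\bar{n}_3=4=n-1$, i.e.\ lies on a proper face of $A_4$, so the MLE does not exist. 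The paper's own proof stumbles at the same spot: its converse direction needs $\sum_l\sum_k b_{k,l}<1$, which amounts to $\sum_{k\,\mathrm{odd}} n_k<n-1$ and does not follow from $n_k<n-1$ for each odd $k$. To salvage your route you would need to add the aggregate odd-coordinate facet, prove completeness of the resulting list by identifying the vertices of the candidate $H$-polytope with the extreme points of $A_{n-1}$, and restate condition (ii) accordingly.
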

\begin{proof}
The MLE exists if and only if  $\bar{n}^{(1)}\in int(A_{n-1})$.

In the case that $n$ is even, a point in  $int(A_{n-1})$ is written as $a_0e_0+a_1e_1+\dots+a_{n-2}e_{n-2}+c\mathbf{0}_{n-2}$, where $a_k>0$ for $0\leq k<n-1$, $c>0$, and $\sum_{k=0}^{n-2}a_k+c=1$. Therefore, $\bar{n}^{(1)}\in int(A_{n-1})$ if and only if $a_k=\bar{n}^{(1)}_k$  and $c=n-\sum_{k=0}^{n-2}\bar{n}^{(1)}_k$, which implies the result.

In the case that $n$ is odd, a point in  $int(A_{n-1})$ is written as $\sum_{\substack{l=0 \\ l \text{\n even}}}^{n-3}a_le_l+\sum_{\substack{k=1 \\ k \text{\n odd}}}^{n-2}\sum_{\substack{l=0 \\ l \text{\n even}}}^{n-3}b_{k,l}e_{k,l}+c\mathbf{0}_{n-2}$, where $a_l>0$ for even $l$, $0\leq l<n-1$, $b_{k,l}>0$ for odd $k$ and even $l$, $0\leq k,l<n-1$, $c>0$, and $\sum_{\substack{l=0 \\ l \text{\n even}}}^{n-3}a_l+\sum_{\substack{l=0 \\ l \text{\n even}}}^{n-3}\sum_{\substack{k=1 \\ k \text{\n odd}}}^{n-2}b_{k,l}+c=1$.

If $\bar{n}^{(1)}\in int(A_{n-1})$, for every odd $k$, $\sum_{l=0}^{n-3}b_{k,l}=n_k/(n-1)$, which implies $n_k>0$. In addition, we know that $\sum_{l=0}^{n-3}b_{k,l}<1$, which implies that $n_k<n-1$. For every even $l$, $a_l=(1/n)(n_l-\sum_{k=1}^{n-2}b_{k,l})$, which implies $n_l>0$. Now by using $\sum_{l=0}^{n-3}a_l+\sum_{l=0}^{n-3}\sum_{k=1}^{n-2}b_{k,l}<1$, we obtain $\sum_{r=0}^{n-2}n_r<n$.

Conversely, if conditions (i) and (ii) hold, we let $b_{k,l}=(2 n_k)/(n-1)^2$ and $a_l= (1/n)(n_l-(2/(n-1)^2\sum_{k=1}^{n-2}n_k)$. We conclude that $a_l>0$, $b_{k,l}>0$, and $\sum_{l=0}^{n-3}a_l+\sum_{l=0}^{n-3}\sum_{k=1}^{n-2}b_{k,l}<1$, which imply the result.
\end{proof}
%There is another way of expressing condition (ii) of the case $n$ odd with respect to observed graphs: Except the graphs $R_{kl}$ for $k$ odd and $l$ even, there should be another graph observed in order for the MLE to exist. For a single observation, we have the following corollary:
\begin{coro}
For a single observation in the $1$K model, the MLE does not exist.
\end{coro}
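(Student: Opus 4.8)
The plan is to specialize Theorem~\ref{thm:1} to the case $m=1$ and show that the necessary conditions it imposes for existence of the MLE can never be satisfied by the degree distribution of a single graph. When $m=1$ we simply have $\bar n^{(1)}=n^{(1)}(g_1)$, so the object to analyze is the integer vector $(n_0(g_1),\dots,n_{n-1}(g_1))$, whose entries are nonnegative and sum to $n$; the claim is that no such vector coming from an actual graph lies in $\mathrm{int}(A_{n-1})$.

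The first step is to isolate the part of the criterion common to both parities. In the even case Theorem~\ref{thm:1} requires $n_k(g_1)\neq 0$ for all $k$ with $0\le k\le n-2$ together with $\sum_{k=0}^{n-2}n_k(g_1)<n$, and in the odd case the (strictly stronger) condition~(i) requires exactly the same two things; so in either case these are necessary. Since $\sum_{k=0}^{n-1}n_k(g_1)=n$, the inequality $\sum_{k=0}^{n-2}n_k(g_1)<n$ is equivalent to $n_{n-1}(g_1)\geq 1$. Hence existence of the MLE would force $n_k(g_1)\geq 1$ for every $k\in\{0,1,\dots,n-1\}$, and because these $n$ quantities sum to $n$, each must equal $1$; i.e. $g_1$ would contain exactly one node of each degree $0,1,\dots,n-1$.

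The last step is to derive the contradiction from first principles: a node of degree $n-1$ is adjacent to every other node, so no node can have degree $0$, and thus a node of degree $0$ and a node of degree $n-1$ cannot coexist in a simple graph on $n\ge 2$ nodes. Therefore $\bar n^{(1)}=n^{(1)}(g_1)\notin\mathrm{int}(A_{n-1})$ and, by Theorem~\ref{thm:1}, the MLE does not exist. (Equivalently, one may invoke the classical pigeonhole fact that any graph on $n\ge 2$ nodes has two nodes of equal degree, so at least one value $k\in\{0,\dots,n-1\}$ has $n_k(g_1)=0$: if $k\le n-2$ the first condition fails, and if $k=n-1$ then $\sum_{k=0}^{n-2}n_k(g_1)=n$, so the second fails.) I do not anticipate a real obstacle; the only points requiring care are checking that the quoted existence conditions are genuinely necessary in \emph{both} parities (it suffices to use condition~(i) in the odd case) and excluding the degenerate $n=1$ case, where $n^{(1)}_-$ is the empty vector and the model has no parameter.
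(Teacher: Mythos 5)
Your proposal is correct and follows essentially the same route as the paper's own proof: both reduce to the observation that a degree-$0$ node and a degree-$(n-1)$ node cannot coexist in a simple graph, so either $\bar n_0=0$ or $\sum_{k=0}^{n-2}\bar n_k=n$, violating the conditions of Theorem~\ref{thm:1} in either parity. Your version is slightly more explicit (noting that existence would force $n_k(g_1)=1$ for every $k$, and flagging the $n=1$ degeneracy), but the underlying argument is the same.
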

\begin{proof}
For an observed $g$, if $n_{n-1}(g)\neq 0$, then $n_0(g)=0$ and vice vera. $n_{n-1}(g)=0$ implies $\sum_{k=0}^{n-2}\bar{n}_k=n$. Therefore, in either case the MLE does not exist.
\end{proof}
\begin{example}
We proceed with Example \ref{ex:1} for networks with $3$ nodes, illustrated in Figure 2. Theorem \ref{thm:1} implies that in order for the MLE to exist (i) $\bar{n}_0\neq 0$, $\bar{n}_1 \neq 0$, and $\bar{n}_1 + \bar{n}_0 < 3$; (ii) one should observe either $g_1$ or $g_4$ since both $g_2$ and $g_3$ are of form $R_{kl}$. If only $(i)$ or $(ii)$ holds, then only one parameter is estimable.
\end{example}
\paragraph{Estimable parameters.}
By partially differentiating the log-likelihood function in (\ref{eq:6}), we conclude that the MLE $\hat{\alpha}$ should satisfy the following system of equations:
  \begin{equation}\label{eq:7}
\frac{\partial \psi(\hat{\alpha})}{\partial \hat{\alpha}_k}=\bar{n}_k,\nn\nn\text{for}\nn k\in\{0.\dots,n-2\}.
\end{equation}
%\begin{prop}\label{prop:3}
Therefore, from (\ref{eq:4}), the MLE for $1$K model with $m$ observations must satisfy the following system of equations:
\begin{equation}\label{eq:9}
\frac{\sum_{i=1}^Mn_k(g_i)e^{\sum_{l=0}^{n-2}n_l(g_i)\hat{\alpha}_l}}{\sum_{i=1}^Me^{\sum_{l=0}^{n-2}n_l(g_i)\hat{\alpha}_l}}=\bar{n}_k,\nn\nn k\in\{0,\dots,n-2\}.
\end{equation}
%\end{prop}
%\begin{proof}
%from (\ref{eq:4})The proof follows trivially from (\ref{eq:4}) and (\ref{eq:7}).
%\end{proof}
%\begin{example}
%We proceed with Example \ref{ex:1} for networks with $3$ nodes, illustrated in Figure 2. By Proposition \ref{prop:3} we have that the MLE satisfies the following system of equations:
%  \begin{equation}\label{eq:8}\left\{
%                               \begin{array}{l}
%                                 \frac{3e^{2\hat{\alpha}_1+\hat{\alpha}_0}+3e^{3\hat{\alpha}_0}}{1+3e^{2\hat{\alpha}_1}+3e^{2\hat{\alpha}_1+\hat{\alpha}_0}+e^{3\hat{\alpha}_0}}=\bar{n}_0  \\\\
%                                 \frac{6e^{2\hat{\alpha}_1}+6e^{2\hat{\alpha}_1+\hat{\alpha}_0}}{1+3e^{2\hat{\alpha}_1}+3e^{2\hat{\alpha}_1+\hat{\alpha}_0}+e^{3\hat{\alpha}_0}}=\bar{n}_1.
%                               \end{array}
%                             \right.
%\end{equation}
%It can be verified that the system of equations in (\ref{eq:8}) has solutions when $0 < \bar{n}_1 < 2, \bar{n}_0 > 0,\bar{n}_1 + \bar{n}_0 < 3$, which conforms with the result obtained form Example 2.
%\end{example}
Notice that if for a fixed $k$, $\bar{n}_k=0$ then $\sum_{i=1}^Mn_k(g_i)e^{\sum_{l=0}^{n-2}n_l(g_i)\hat{\alpha}_l}=0$. We observe that $\hat{\alpha}_k$ is the only parameter that appears in all terms and does not appear in all terms of the denominator of the left hand side of (\ref{eq:9}). Therefore, $\hat{\alpha}_k=-\infty$. This corresponds to $\hat{\tilde{p}}_k=0$, or equivalently $\hat{p}_k=0$ since it is that assumed $p_{n-1}\neq 0$.

Thus, based on the MLE, the model implies that the probability of a node having a specific degree is zero when no node with that degree has been observed. Hence, it is plausible to remove such parameters from the model and simply focus on the submodel of exponential family form. The corresponding model polytope would then be simply the same, embedded in the Euclidian space with remaining coordinates. By this method we can still estimate a subset of parameters whose corresponding sufficient statistic is nonzero.

Notice that even with a single observation, there are sometimes a considerable number of parameters that can be estimated. The following proposition deals with the extreme case of such graphs.
\begin{prop}\label{prop:4}
There exists a graph $T_n$ of every size $n$ such that $n_0(T_n)=0$ and $n_k(T_n)\neq 0$ for $k\in\{1,\dots,n-1\}$.
\end{prop}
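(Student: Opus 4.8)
The statement forces a vertex of degree $n-1$ and forbids isolated vertices, so it can only hold for $n\geq 2$; I take $n\geq 2$ throughout. My plan is to exhibit $T_n$ by a short induction in steps of two, using the base cases $T_2=K_2$ (degree multiset $\{1,1\}$) and $T_3=P_3$ (degree multiset $\{1,2,1\}$), both of which clearly satisfy the two requirements $n_0=0$ and $n_k\geq 1$ for $1\leq k\leq n-1$.

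For the inductive step, suppose $T_{n-2}$ has been built on a vertex set $V$ with $|V|=n-2$, having no isolated vertex and with every value in $\{1,\dots,n-3\}$ occurring among its degrees. Form $T_n$ on $V\cup\{u,w\}$ by joining $w$ to every other vertex (so $\deg_{T_n}(w)=n-1$) and joining $u$ only to $w$ (so $\deg_{T_n}(u)=1$). Each $x\in V$ acquires exactly one new neighbour, namely $w$, so $\deg_{T_n}(x)=\deg_{T_{n-2}}(x)+1$; hence the set of degrees realised on $V$ shifts from $\{1,\dots,n-3\}$ up to $\{2,\dots,n-2\}$, and adjoining the two new values $1$ and $n-1$ shows that every value in $\{1,\dots,n-1\}$ is realised in $T_n$, while still no vertex has degree $0$. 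Since incrementing $n$ by $2$ preserves the parity of $n$, the two base cases cover all $n\geq 2$, completing the construction.

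An equivalent closed-form choice is the threshold graph on $\{1,\dots,n\}$ with $i\sim j\iff i+j>n$: a direct count shows that the neighbours of $i$ are exactly the $j$ with $n-i+1\le j\le n$, from which $i$ itself must be removed precisely when $i>n/2$, giving $\deg(i)=i$ for $i\le n/2$ and $\deg(i)=i-1$ for $i>n/2$; the degree multiset is then $\{1,2,\dots,n-1\}$ with the single value $\lfloor n/2\rfloor$ repeated, again with $n_0=0$. There is no genuine obstacle in the proof; the only point requiring care is the bookkeeping that the construction realises every degree in $\{1,\dots,n-1\}$ (that it stays within simple graphs is immediate in both versions), and the two-step recursion is what I would write up because it avoids the even/odd case distinction that a direct degree-sequence feasibility argument would otherwise entail.
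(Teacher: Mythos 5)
Your proof is correct, and both of your constructions (the two-step induction and the closed-form threshold graph $i\sim j\iff i+j>n$) do produce the required graph, which is the classical ``antiregular'' graph. However, your route differs from the paper's. The paper inducts in steps of one: given $T_n$, whose degree multiset is $\{1,\dots,n-1\}$ with exactly one value $k'$ repeated, it adjoins a single new vertex and connects it to the vertices of degrees $n-1, n-2,\dots$ down to $k'$; those degrees shift up to $n,\dots,k'+1$, the untouched vertices retain $1,\dots,k'$, and the new vertex lands at degree $n-k'\geq 1$, so all of $\{1,\dots,n\}$ is realised on $n+1$ vertices. That argument must identify and track the repeated degree $k'$ at every stage, which is exactly the bookkeeping your two-step version avoids: adding a dominating vertex plus a pendant vertex shifts \emph{all} old degrees up by one uniformly and supplies the two extreme degrees $1$ and $n-1$ fresh, at the cost of needing two base cases to cover both parities. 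Your closed-form description goes further than either induction, since it exhibits $T_n$ explicitly and identifies the repeated degree as $\lfloor n/2\rfloor$; this is arguably the most informative version and would also make the paper's later claim about the number of non-zero entries of the bi-degree vector of $T_n$ easier to verify. The only cosmetic point is that you should state (as you do) that the claim is read for $n\geq 2$, which the paper leaves implicit by starting its induction at $K_2$.
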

\begin{proof}
We prove the result by induction on the number of nodes. For the base, where $n=2$, the result holds for $K_2$. Suppose that there exists such a graph $T_n$. We prove it for $n+1$: Since $n_0(T_n)=0$ there is a $k'\in\{1,\dots,n-1\}$ such that $n_{k'}=2$ and $n_k=1$ if $k\neq k'$. We construct the graph $T_{n+1}$ as follows. We add a node to $T_n$ and start connecting it to the nodes starting from the node with the node with degree $n-1$ and stop when we reach $k'$. Now regardless of what degree the added node has, there are nodes of all degrees except zero in
graph.
\end{proof}
Notice, however, that the large sum in (\ref{eq:9})  entails the common problem with ERG modeling that the computing of the MLE for this
model ultimately requires MCMC methods, just like with most other ERGMs.

\paragraph{Asymptotics.} We have shown that we can estimate a subset of parameters whose corresponding sufficient statistic is nonzero. Here we calculate the probability of a sufficient statistic to be nonzero, and discuss its behaviour asymptotically. In addition, we provide the asymptotical expected value of the number of non-zero sufficient statistics.

Under the $1$K-model, for each $k\leq n-1$, we have the following:
\begin{equation}\label{eq:12}
    \mathbb{P}(n_k(G)>0) = \frac{\sum_{g\in\mathcal{G}_n:n_k(g)\neq
0}\prod_{k'=0}^{n-1}p_{k'}^{n_{k'}(g)}}{\sum_{g\in\mathcal{G}_n}\prod_{k'=0}^{n-1}p_{k'}^{n_{k'}(g)}}.
%\begin{split}
%P(n_k(g)>0)=\sum_{g\in\mathcal{G}_n:n_k(g)\neq 0}P(g)\\
%%=\sum_{g\in\mathcal{G}_n:n_k(g)\neq 0}\varphi(p)\prod_{k'=0}^{n-1}p_{k'}^{n_{k'}(g)}\\=
%\frac{\sum_{g\in\mathcal{G}_n:n_k(g)\neq
%0}\prod_{k'=0}^{n-1}p_{k'}^{n_{k'}(g)}}{\sum_{g\in\mathcal{G}_n}\prod_{k'=0}^{n-1}p_{k'}^{n_{k'}(g)}}%\\=\frac{1}{1+\frac{\sum_{g\in\mathcal{G}_n:n_k(g)= 0}\prod_{k'=0}^{n-1}p_{k'}^{n_{k'}(g)}}{\sum_{g\in\mathcal{G}_n:n_k(g)\neq 0}\prod_{k'=0}^{n-1}p_{k'}^{n_{k'}(g)}}}.
%\end{split}
\end{equation}
%It is known in graph theory literature that the number of non-isomorphic graphs with $n$ nodes is asymptotically $(2^{n(n-1)/2})/n!$. We also need the following lemma.
%\begin{lemma}\label{lem:14}
%For every $k$, $0\leq k\leq n$, it holds asymptotically that $\frac{|\{g\in\mathcal{G}_n:n_k(g)\neq 0\}|}{|\mathcal{G}_n|}\geq \frac{n}{2^{n-1}}$.
%\end{lemma}
%\begin{proof}
%First we show that for every $g$ and $k$, $0< k\leq n$, $|\{g\in\mathcal{G}_n:n_k(g)\neq 0\}|>|\{g\in\mathcal{G}_n:n_0(g)\neq 0\}|$:
%
%Now we have that $\frac{|\{g\in\mathcal{G}_n:n_k(g)\neq 0\}|}{|\mathcal{G}_n|}\leq\frac{|\{g\in\mathcal{G}_n:n_0(g)\neq 0\}|}{|\mathcal{G}_n|}$, but $|\{g\in\mathcal{G}_n:n_0(g)\neq 0\}|$ is equal to the number of non-isomorphic graphs with $n-1$ nodes since they have a one-to-one correspondence by adding an isolated node to a graph with $n-1$ nodes or removing an isolated node from a graph with $n$ nodes.
%
%Therefore, $\frac{|\{g\in\mathcal{G}_n:n_k(g)\neq 0\}|}{|\mathcal{G}_n|}\geq \frac{|\mathcal{G}_{n-1}|}{|\mathcal{G}_n|}=\frac{n}{2^{n-1}}$.
%\end{proof}
%Notice again that for every $n>0$, $\sum_{k'=0}^{n-1}p_{k'}=1$, and hence, asymptotically, except finitely many $p_{k'}$, the rest tend to zero. Thus we consider the two following cases separately:
%\begin{description}
%  \item[(i)] For all $k'>0$, $\displaystyle\lim_{n \to \infty} p_{k'}=0$.
%  \item[(ii)] For $k''\in\{k_1,\dots,k_s\}$, $\displaystyle\lim_{n \to \infty} p_{k''}=\alpha>0$, and for all other $k'$, $\displaystyle\lim_{n \to \infty} p_{k'}=0$.
%\end{description}

We will now consider the very special case  where all $p_{k'}$ are equal, which implies that they are all equal to $1/n$. In this case we see that the model is the same as the Erd\"{o}s-R\'{e}nyi model with $p=1/2$.

\begin{prop}\label{prop:5}
    Suppose that $p_i=p$ for all $i$ and let $k = k(n)=(n-1)/2+a_n$, where $\{ a_n \}$ is a
    sequence such that $0 \leq a_n \leq (n-1)/2$ for all $n$. It then holds that
\begin{enumerate}
    \item  $\displaystyle\lim_{n \to \infty}\mathbb{P}(n_k(g)>0)=1$ if $a_n=O(\sqrt{n})$;
    \item $\displaystyle\lim_{n \to \infty}\mathbb{P}(n_k(g)>0)=0$, if $a_n =
      \Omega(\sqrt{n \log n})$.
\end{enumerate}
\end{prop}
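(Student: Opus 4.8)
Under the stated hypothesis the model coincides with the Erd\"{o}s-R\'{e}nyi model $G(n,1/2)$, so I would take $G\sim G(n,1/2)$ and set $X=n_k(G)=\sum_{v=1}^n\mathbf 1\{\deg_G(v)=k\}$, noting that each $\deg_G(v)$ has the $\mathrm{Bin}(n-1,1/2)$ law, hence $\mathbb E[X]=n\binom{n-1}{k}2^{-(n-1)}$. The plan is the usual first/second moment dichotomy: for part (2) a first-moment (Markov) bound, and for part (1) a second-moment bound whose crux is that the degrees of two distinct vertices are asymptotically independent.

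The first step is to estimate $\mathbb E[X]$. Writing $k=(n-1)/2+a_n$, Hoeffding's inequality gives $\binom{n-1}{k}2^{-(n-1)}\le \mathbb P(\mathrm{Bin}(n-1,1/2)\ge k)\le e^{-2a_n^2/(n-1)}$, so $\mathbb E[X]\le n\,e^{-2a_n^2/(n-1)}$; and a Stirling/local-CLT expansion gives the matching two-sided estimate $\mathbb E[X]=(1+o(1))\sqrt{2n/\pi}\;e^{-2a_n^2/(n-1)}$ for $a_n=O(\sqrt n)$. For part (2): when $a_n=\Omega(\sqrt{n\log n})$ (with a sufficiently large implied constant) we get $n\,e^{-2a_n^2/(n-1)}\to 0$, whence $\mathbb P(X>0)\le\mathbb E[X]\to 0$ by Markov. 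For part (1): when $a_n=O(\sqrt n)$ the exponent $2a_n^2/(n-1)$ is bounded, so $e^{-2a_n^2/(n-1)}$ stays bounded away from $0$ and $\mathbb E[X]\ge c\sqrt n\to\infty$.

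For part (1) I would then run the second-moment method: since $\mathbb P(X>0)\ge(\mathbb E[X])^2/\mathbb E[X^2]$ for nonnegative $X$, it suffices to show $\mathbb E[X^2]=(1+o(1))(\mathbb E[X])^2$. Writing $\mathbb E[X^2]=\mathbb E[X]+\sum_{u\ne v}\mathbb P(\deg_G(u)=k,\deg_G(v)=k)$ and conditioning on $\mathbf 1\{u\sim v\}$ — given which $\deg_G(u)$ and $\deg_G(v)$ are independent, each distributed as $\mathbf 1\{u\sim v\}+\mathrm{Bin}(n-2,1/2)$ — one obtains, with $b_j:=\mathbb P(\mathrm{Bin}(n-2,1/2)=j)$, that $\mathbb P(\deg_G(u)=k,\deg_G(v)=k)=\tfrac12 b_{k-1}^2+\tfrac12 b_k^2$ while $\mathbb P(\deg_G(u)=k)=\tfrac12 b_{k-1}+\tfrac12 b_k$. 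Since $b_{k-1}/b_k=k/(n-1-k)\to 1$ when $k=(n-1)/2+O(\sqrt n)$, the former is $(1+o(1))$ times the square of the latter, uniformly; hence $\mathbb E[X^2]\le\mathbb E[X]+(1+o(1))n^2\,\mathbb P(\deg_G(u)=k)^2=\mathbb E[X]+(1+o(1))(\mathbb E[X])^2$, and because $\mathbb E[X]\to\infty$ this equals $(1+o(1))(\mathbb E[X])^2$. Therefore $\mathbb P(X>0)\to1$.

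The main obstacle is the variance estimate in part (1), i.e.\ making the asymptotic pairwise independence of vertex degrees quantitative; the edge-conditioning decomposition above handles it cleanly and reduces the matter to the elementary ratio $b_{k-1}/b_k\to1$. A secondary point requiring care is the constant in part (2): Markov only forces $\mathbb P(X>0)\to0$ once $a_n^2\ge(\tfrac12+\varepsilon)n\log n$, so the $\Omega(\sqrt{n\log n})$ hypothesis is to be read with a large enough implied constant — and a sharper estimate of $\mathbb E[X]$ does not remove this, as the transition for $\mathbb P(n_k(G)>0)$ genuinely occurs near $a_n\asymp\sqrt{(n\log n)/2}$, leaving the band between $\sqrt n$ and $\sqrt{n\log n}$ intentionally uncovered by the proposition.
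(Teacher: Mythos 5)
Your proof is correct and reaches the same two conclusions, but it is more self-contained than the paper's. Both arguments begin identically: the equal-$p_i$ hypothesis makes every graph equally likely, so the model is $G(n,1/2)$, and the central quantity is $\mathbb{E}[n_k(G)]=n\binom{n-1}{k}2^{-(n-1)}$, which is exactly the paper's $\lambda_k(n)$. The paper then invokes Theorem~3.1 of Bollob\'as as a black box (``$\lambda_k\to 0$ implies $\mathbb{P}(n_k>0)\to 0$ and $\lambda_k\to\infty$ implies $\mathbb{P}(n_k>0)\to 1$'') and spends its effort on a Stirling expansion of $\lambda_k(n)\asymp \sqrt{n}\,e^{-2a_n^2/n}$; you instead prove both implications directly, via Markov for part (2) and via an explicit second-moment argument for part (1), with the edge-conditioning decomposition $\mathbb{P}(\deg u=k,\deg v=k)=\tfrac12 b_{k-1}^2+\tfrac12 b_k^2$ and the ratio $b_{k-1}/b_k=k/(n-1-k)\to 1$ doing the work of the citation. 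That decomposition is correct (given $\mathbf{1}\{u\sim v\}$ the two degrees really are independent shifted binomials), so your variance bound and the Paley--Zygmund step go through; what your route buys is independence from the reference, at the cost of a page of computation the paper avoids. Two small caveats. First, your observation that part (2) needs a large enough implied constant in $\Omega(\sqrt{n\log n})$ is a genuine issue --- and it afflicts the paper's proof equally, since $\sqrt{n}\,e^{-2a_n^2/n}\to\infty$ when $a_n=c\sqrt{n\log n}$ with $c<1/2$ --- but your stated location of the true threshold is slightly off: the sharp constant coming from $\mathbb{E}[X]\sim\sqrt{2n/\pi}\,e^{-2a_n^2/n}$ is $a_n\sim\tfrac12\sqrt{n\log n}$, not $\sqrt{(n\log n)/2}$; the latter is only what the cruder bound $\mathbb{E}[X]\le n e^{-2a_n^2/(n-1)}$ forces, so a sharper first-moment estimate does improve the constant (though it cannot make the statement constant-free). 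Second, you assert rather than verify the reduction to $G(n,1/2)$; the one-line justification is that $\sum_k n_k(g)=n$ for every $g$, so every graph receives weight $p^n$ in \eqref{eq:12} and the distribution is uniform.
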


\begin{proof}
By using (\ref{eq:12}), we have that
$P(n_k(g)>0)=\frac{\sum_{g\in\mathcal{G}_n:n_k(g)\neq 0}\prod_{k'=0}^{n-1}p^n}{\sum_{g\in\mathcal{G}_n}\prod_{k'=0}^{n-1}p^n}=\frac{|\{g\in\mathcal{G}_n:n_k(g)\neq 0\}|}{|\mathcal{G}_n|}$,
which is the same as the same probability in the Erd\"{o}s-R\'{e}nyi model with
$p=1/2$. Let $\lambda_k(n)=n{n-1 \choose k}(1/2)^{n-1}$. From the theorem 3.1 in
\cite{bol01}, we know that if $\displaystyle\lim_{n \to
\infty}\lambda_k(n)=0$, then $\displaystyle\lim_{n \to \infty}P(n_k(g)>0)=0$,
and if  $\displaystyle\lim_{n \to \infty}\lambda_k(n)=\infty$, then
$\displaystyle\lim_{n \to \infty}P(n_k(g)>0)=1$. We show that the latter holds
for $ a_n=O(\sqrt{n})$, and the former if $a_n = \Omega( \sqrt{n \log n})$. By using Stirling's approximation,
%\begin{equation*}
%    \lambda_k(n)\sim\frac{(n-1)^{n-\frac{1}{2}}}{\left( n-1-k \right)^{n-k-\frac{1}{2}}2^{n-\frac{1}{2}}k^{k+\frac{1}{2}}\sqrt{\pi}}=\left( \frac{n-1}{2} \right)^{n-\frac{1}{2}}\frac{n}{(\frac{n-1}{2}-a_n(k))^{\frac{n}{2}-a_n(k)}(\frac{n-1}{2}+a_n(k))^{\frac{n}{2}+a_n(k)} \sqrt{\pi}}.
%\end{equation*}
%Thus,
%\begin{equation}\label{eq:121}
    %\begin{array}{rcl}
$\lambda_k(n) \sim \frac{n (n-1)^{n-\frac{1}{2}}}{\left( n-1-k \right)^{n-k-\frac{1}{2}}2^{n-\frac{1}{2}}k^{k+\frac{1}{2}}\sqrt{\pi}} = \left( \frac{n-1}{2} \right)^{n-\frac{1}{2}}\frac{n}{(\frac{n-1}{2}-a_n(k))^{\frac{n}{2}-a_n(k)}(\frac{n-1}{2}+a_n(k))^{\frac{n}{2}+a_n(k)} \sqrt{\pi}} \nonumber \asymp  \frac{n ^{n+1/2}}{2^n }\frac{1}{\left(\frac{n}{2}\right)^n \left( 1 -\frac{2 a_n(k)}{n} \right)^{\frac{n}{2}-a_n(k)} \left( 1 +\frac{2 a_n(k)}{n} \right)^{\frac{n}{2}+a_n(k) }}  =  \frac{\sqrt{n}}{ \left( 1 -\frac{2 a_n(k)}{n} \right)^{\frac{n}{2}-a_n(k)} \left( 1 +\frac{2 a_n(k)}{n} \right)^{\frac{n}{2}+a_n(k) }} =  \frac{\sqrt{n} } { \left( 1 - \frac{4 a_n^2}{n} \frac{1}{n} \right)^{\frac{n}{2}} \left( 1 - 2 \frac{a_n^2}{n} \frac{1}{a_n} \right)^{-a_n}  \left( 1 + 2 \frac{a_n^2}{n} \frac{1}{a_n} \right)^{a_n}}$.

%	\begin{eqnarray}
%    \lambda_k(n) & \sim & \frac{n (n-1)^{n-\frac{1}{2}}}{\left( n-1-k \right)^{n-k-\frac{1}{2}}2^{n-\frac{1}{2}}k^{k+\frac{1}{2}}\sqrt{\pi}} \nonumber \\
%    & = & \left( \frac{n-1}{2} \right)^{n-\frac{1}{2}}\frac{n}{(\frac{n-1}{2}-a_n(k))^{\frac{n}{2}-a_n(k)}(\frac{n-1}{2}+a_n(k))^{\frac{n}{2}+a_n(k)} \sqrt{\pi}} \nonumber \\
%    & \asymp &  \frac{n ^{n+1/2}}{2^n }\frac{1}{\left(\frac{n}{2}\right)^n \left( 1 -\frac{2 a_n(k)}{n} \right)^{\frac{n}{2}-a_n(k)} \left( 1 +\frac{2 a_n(k)}{n} \right)^{\frac{n}{2}+a_n(k) }} \nonumber \\
%    & = &   \frac{\sqrt{n}}{ \left( 1 -\frac{2 a_n(k)}{n} \right)^{\frac{n}{2}-a_n(k)} \left( 1 +\frac{2 a_n(k)}{n} \right)^{\frac{n}{2}+a_n(k) }} \nonumber\\
%    & = & \frac{\sqrt{n} } { \left( 1 - \frac{4 a_n^2}{n} \frac{1}{n} \right)^{\frac{n}{2}} \left( 1 - 2 \frac{a_n^2}{n} \frac{1}{a_n} \right)^{-a_n}  \left( 1 + 2 \frac{a_n^2}{n} \frac{1}{a_n} \right)^{a_n}}  \label{eq:here}\\
%%    & \asymp & \sqrt{n}.  \nonumber\\
%    \nonumber
%\end{eqnarray}
%\end{array}
  %  \frac{\sqrt{n}}{ \sqrt{\pi}  \left(2 g \left( \frac{a_n(k)}{n}\right) \right)^n}% (\frac{1}{2}-\frac{a_n(k)}{n})^{\frac{1}{2}-\frac{a_n(k)}{n}}(\frac{1}{2}+\frac{a_n(k)}{n})^{\frac{1}{2}+\frac{a_n(k)}{n}} \right)^n}.
%\end{equation}
If $a_n = O(\sqrt{n})$ the previous display is $\asymp \sqrt{n}$ and hence tends
to infinity. If $a_n = \Omega( \sqrt{n \log n})$ the term will vanish instead. The claim is proved.
\end{proof}

We now consider the number $N$ of non-zero entries in the degree distribution
and show that it is of order $O \left( \sqrt{n \log n}\right)$, which
corresponds to the number of estimable parameters in the model.

\begin{prop}\label{prop:6}
    Suppose that $p_i=p$ for all $i$ and let $N$ denote the number of entries in
    the degree distribution that are non-zero. Then,
    \[
	\mathbb{E}[N] = O \left( \sqrt{n \log n} \right).
    \]
\end{prop}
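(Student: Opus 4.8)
The plan is to write $N=\sum_{k=0}^{n-1}\mathbb{1}\{n_k(G)>0\}$, so that by linearity of expectation
\[
\mathbb{E}[N]=\sum_{k=0}^{n-1}\mathbb{P}\bigl(n_k(G)>0\bigr),
\]
and then to split this sum into a ``bulk'' range of degrees close to $(n-1)/2$ and a complementary ``tail'' range, estimating each piece separately. As observed in the proof of Proposition \ref{prop:5}, the assumption $p_i=p$ for all $i$ forces $p=1/2$ and makes $\mathbb{P}(n_k(G)>0)$ equal to the corresponding probability under the Erd\H{o}s--R\'enyi model $G(n,1/2)$. Two elementary bounds will be used: the trivial $\mathbb{P}(n_k(G)>0)\le 1$, and the union bound over the $n$ vertices,
\[
\mathbb{P}\bigl(n_k(G)>0\bigr)\le \mathbb{E}[n_k(G)]=n\binom{n-1}{k}2^{-(n-1)}=\lambda_k(n).
\]

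Fix a constant $C>1$ and set $a_k=|k-(n-1)/2|$. For the bulk indices, those with $a_k\le C\sqrt{n\log n}$, there are at most $2C\sqrt{n\log n}+1$ of them, so bounding each term by $1$ contributes $O(\sqrt{n\log n})$ to $\mathbb{E}[N]$. For the tail indices, $a_k> C\sqrt{n\log n}$, I would instead use $\mathbb{P}(n_k(G)>0)\le\lambda_k(n)$ together with a Hoeffding bound for the binomial: since $\binom{n-1}{k}2^{-(n-1)}=\mathbb{P}(\mathrm{Bin}(n-1,1/2)=k)\le\mathbb{P}\bigl(|\mathrm{Bin}(n-1,1/2)-\tfrac{n-1}{2}|\ge a_k\bigr)\le 2\exp\bigl(-2a_k^2/(n-1)\bigr)$, one gets $\lambda_k(n)\le 2n\exp(-2a_k^2/(n-1))$. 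When $a_k> C\sqrt{n\log n}$ this is at most $2n\cdot n^{-2C^2}$, so summing over the at most $n$ tail indices gives a tail contribution of order $n^{2-2C^2}$, which is $o(1)$ for $C>1$. Combining the two estimates yields $\mathbb{E}[N]=O(\sqrt{n\log n})$.

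The only delicate point is the uniformity of the tail estimate. The Stirling-based asymptotics for $\lambda_k(n)$ used in Proposition \ref{prop:5} are valid only when $k$ and $n-1-k$ both grow linearly in $n$, i.e.\ for $k$ in the bulk, so near the extreme degrees $k\approx 0$ or $k\approx n-1$ they cannot be invoked; replacing Stirling by Hoeffding's inequality sidesteps this, since the latter holds for all $k$ and in fact shows the extreme terms are exponentially small in $n$ and hence entirely negligible in the sum. A minor bookkeeping point is to take the threshold constant $C$ large enough (any $C>1$) that the factor $n$ lost to the union bound over the $n$ indices is absorbed by the Gaussian decay; with somewhat more care one could even obtain $\mathbb{E}[N]=\Theta(\sqrt{n\log n})$, matching the heuristic that this is the order of the number of estimable parameters, but the stated $O(\sqrt{n\log n})$ follows immediately from the split above.
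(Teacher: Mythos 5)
Your proof is correct, and it rests on exactly the same quantitative fact as the paper's: under the assumption $p_i=p$ the model is uniform on $\mathcal{G}_n$, i.e.\ Erd\"{o}s--R\'{e}nyi with edge probability $1/2$, the degrees are $\mathrm{Bin}(n-1,1/2)$, and Hoeffding's inequality confines them to a window of width $O(\sqrt{n\log n})$ around the mean. The two arguments package this differently. The paper decomposes the probability space: it conditions on the global event $\mathcal{A}$ that some degree deviates by more than $k_n=\sqrt{cn\log n}$, notes that on $\mathcal{A}^c$ the number of distinct degrees is deterministically at most $2k_n+1$, and pays the trivial bound $N\le n$ on the rare event $\mathcal{A}$. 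You instead decompose the random variable, writing $N$ as a sum of indicators and bounding each tail term by the first moment $\mathbb{P}(n_k>0)\le\mathbb{E}[n_k]=\lambda_k(n)$ before applying Hoeffding to the point probability $\binom{n-1}{k}2^{-(n-1)}$. The two routes are essentially equivalent --- your first-moment bound is itself a union bound over the $n$ vertices, which is precisely the union bound the paper takes inside $\mathbb{P}(\mathcal{A})$ --- but yours has two small advantages: it connects $\mathbb{E}[N]$ directly to the quantity $\lambda_k(n)$ already analyzed in Proposition \ref{prop:5}, and your observation that Hoeffding (unlike the Stirling asymptotics used there) is uniform in $k$, including $k$ near $0$ and $n-1$, is worth making explicit. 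One nitpick: the hypothesis $p_i=p$ together with $\sum_i p_i=1$ forces $p=1/n$, not $p=1/2$; the value $1/2$ is the edge probability of the induced Erd\"{o}s--R\'{e}nyi model, which is what your argument actually uses, so nothing downstream is affected.
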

\begin{proof}
 Let $k_n = \sqrt{c n \log n}$ for some $c>0$ which will be set below. Assume that $(n-1)p > k_n$ and let $\mathcal{A}$ denote the event that there exists a node with degree less than $(n-1)p - k_n$ or more than $(n-1)p + k_n$. Let $D_i$ denote the degree of node $i$ and notice that $D_i \sim \mathrm{Bin}(n-1,p)$, for all $i$. Then    $\mathcal{A} =  \bigcup_i \{ | D_i - \mathbb{E}[D_i] | > k_n\}$.

By the union bound and Hoeffding inequality we get that, for any $r>0$, $\mathbb{P}(\mathcal{A}) \leq n^2 \exp\{ -2 c \log n\} = \frac{2}{n^r}$,
provided that $c = r/2$. %From this we obtain that the number of non-zero elements in the degree distribution is with high probability no larger than $\sqrt{n \log n}$.
It is now easy to see that the expected number of non-zero entries in the degree
distribution is of order $O(\sqrt{n \log n})$. In details,
$\mathbb{E}[N] = \mathbb{E}[N|\mathcal{A}]\mathbb{P}(\mathcal{A}) +  \mathbb{E}[N|\mathcal{A}^c]\mathbb{P}(\mathcal{A}^c) \leq \frac{n}{n^r} + \left(1 - \frac{2}{n^r} \right) \sqrt{n \log n} = O(\sqrt{n \log n}).$
\end{proof}

We have seen that the case $p_i = p$ for all $i$ and some $p \in (0,1)$
corresponds to the Erd\"{o}s-R\'{e}nyi model with $p=1/2$. Using \eqref{eq:3},
this is equivalent to having $\alpha_k = 0$, for all $k=0,1,\ldots, n-2$. We now
consider the slightly more general case in which the vector $\alpha =
(\alpha_0,\alpha_1,\ldots, \alpha_{n-2})$ belongs to a subset $A$ of
$\mathbb{R}^{n-1}$ such that $\log(c_n) \leq \alpha_i \leq \log(C_n) $, where
$c_n \in (0,1]$ and $C_n \in [1,\infty)$, for all $i$.  Then it is easy to see
    that, for any $\alpha \in A$,
    \begin{equation}\label{eq:Cncn}
 %\left( \frac{c_n}{C_n}\right)^n
 %\mathbb{P}_0(n_k(G)>0) \leq
 \mathbb{P}_{\alpha}(n_k(G) > 0) \leq \left(
 \frac{C_n}{c_n}\right)^n\mathbb{P}_0(n_k(G)>0) \leq \left(
 \frac{C_n}{c_n}\right)^n \lambda_k(n),
 \end{equation}
 where $\lambda_k(n)$ is given in the proof or proposition \ref{prop:5} and
 $\mathbb{P}_\alpha$ denotes a probability of the random graph $G$ when sampled
 from the 1K model with parameter $\alpha$.
Next, for a sequence $\{ a_n \}_{n=1,2,\ldots}$ such that  $0 \leq a_n \leq \frac{n}{2}$, let   $h_n = \left( 1 -\frac{2 a_n}{n} \right)^{\frac{1}{2}-\frac{a_n}{n}} \left( 1 +\frac{2 a_n}{n} \right)^{\frac{1}{2}+\frac{a_n}{n} }$.
Then, by proposition \ref{prop:5}, \eqref{eq:Cncn} yields that, for $k =
    k(n) = (n-1)/2 + a_n$,  $\mathbb{P}_{\alpha}(n_k(G) > 0)$ is bounded by a term that is asymptotically of order $\left( \frac{C_n}{c_n}\right)^n \frac{\sqrt{n}}{(h_n)^n}$.

Now let $b = \mathrm{liminf}_n h_n$. If $a_n = O(\sqrt{n})$, then $(h_n)^n =
    O(1)$ from the proof of proposition \ref{prop:5}, which yields a trivial bound. Thus assume that $ b \in (1,2]$ since $h_n \in [1,2]$ for all $n$. Then, provided that	$\frac{C_n}{c_n} < b$, the probability $ \mathbb{P}_{\alpha}(n_k(G) > 0)$ vanishes. (Notice that this
implies that $1 < C_n / c_n < 2$.)

\paragraph{Relation with the Erd\"{o}s-R\'{e}nyi model.}
Recently and somewhat surprisingly, \cite{cha13}  have shown that some
specifications of the ERGM family lead to models which asymptotically behave
like Erd\"{o}s-R\'{e}nyi (ER) models for appropriate choices of $p$.

Here we provide some results illustrating the relationships between the
ER model and the 1K model.  Below we parametrize the 1K model
using the natural parameter vector $\alpha = (\alpha_1,\ldots, \alpha_n) \in
\mathbb{R}^{n-1}$. Notice that
    we have expunged $n_0$ from the vector of sufficient statistics, a choice
    that entails no loss of generality.

First off, we make the easy observation that  the ER model with probability $p$ can be represented by setting $\alpha_i = i \theta, \quad i=1,\ldots,n-1$,
    where $\theta = \log \left( \frac{p}{1-p} \right)$.
Next, we show that the 1K model is dramatically different from the
Erd\"{o}s-R\'{e}nyi model in the sense of being almost singular to it whenever the natural parameters are uniformly bounded
in absolute value.

    Below, we will denote with $P_p$ the probability distribution of the
    Erd\"{o}s-R\'{e}nyi model with parameter $p \in (0,1)$ and with $P_{\alpha}$
    the
    probability distribution  of the 1K model with natural parameter vector
    $\alpha = (\alpha_1,\ldots,\alpha_{n-1}) \in \mathbb{R}^{n-1}$.

    \begin{prop}
Let $p \in (0,1)$ such that $\min \{ |p -1/2|, 1 - p, p \} > \epsilon$, for a
fixed, arbitrarily small $\epsilon > 0$. Then, there exists a sequence of
subsets $\mathcal{G}_n(p)$ of $\mathcal{G}_n$ such that, for any sequence $\{
\alpha_n \}$ such that $\alpha_n \in \mathbb{R}^{n-1}$ for all $n$ and  $\| \alpha_n \|_\infty = o(n)$,
\[
    \lim_n P_p(\mathcal{G}_n(p)) = 1 \quad \text{and} \quad
    \lim_n P_{\alpha_n}(\mathcal{G}_n(p)) = 0.
    \]
\end{prop}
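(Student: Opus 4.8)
\emph{Proof proposal.} The plan is to take $\mathcal{G}_n(p)$ to be a set defined solely through the number of edges. Fix a small constant $\delta=\delta(\epsilon)>0$ and set
\[
\mathcal{G}_n(p)=\Big\{\,g\in\mathcal{G}_n:\ \big|\,e(g)-p\,{n\choose2}\,\big|<\delta n^2\,\Big\},
\]
which depends on $p$ alone. Under $P_p$ the edge count $e(G)$ is a sum of ${n\choose2}$ i.i.d.\ $\mathrm{Bernoulli}(p)$ indicators, so $e(G)/{n\choose2}\to p$ in probability (e.g.\ by Hoeffding's inequality); hence $P_p(\mathcal{G}_n(p))\to 1$, which is the first limit and uses only that $p$ is fixed.

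For the second limit I would compare $P_{\alpha_n}$ with the uniform distribution $P_0$ on $\mathcal{G}_n$, which as already noted coincides with the Erd\"{o}s--R\'{e}nyi model $P_{1/2}$, i.e.\ the $1$K model at natural parameter $\alpha=0$. Writing the $1$K model in the non-minimal product form $P_\alpha(g)\propto\prod_{k=0}^{n-1}\tilde p_k^{\,n_k(g)}$ with $\tilde p_k=e^{\alpha_k}$ and $\tilde p_0=1$, and using $\sum_k n_k(g)=n$, every un-normalized weight lies in $[\,e^{-n\|\alpha\|_\infty},\,e^{n\|\alpha\|_\infty}\,]$, so the ratio of any two of them is at most $e^{2n\|\alpha\|_\infty}$. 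This yields the general form of the bound in \eqref{eq:Cncn},
\[
P_{\alpha_n}(\mathcal{G}_n(p))\ \le\ e^{2n\|\alpha_n\|_\infty}\,P_0(\mathcal{G}_n(p)).
\]
It then remains to show $P_0(\mathcal{G}_n(p))=P_{1/2}(\mathcal{G}_n(p))$ is $e^{-\Theta(n^2)}$: under $P_{1/2}$ the edge count concentrates at $\frac12{n\choose2}$, whereas membership in $\mathcal{G}_n(p)$ forces $|e(G)-\frac12{n\choose2}|\ge|p-\frac12|{n\choose2}-\delta n^2\ge c\,n^2$ for some $c=c(\epsilon)>0$ once $\delta$ is small enough and $n$ large (this is exactly where the hypothesis $|p-\frac12|>\epsilon$ enters). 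Hoeffding's inequality applied to the ${n\choose2}$ i.i.d.\ $\mathrm{Bernoulli}(1/2)$ edge indicators then gives $P_0(\mathcal{G}_n(p))\le 2e^{-c'n^2}$ for some $c'=c'(\epsilon)>0$. Combining the two displays, $P_{\alpha_n}(\mathcal{G}_n(p))\le 2\exp\{\,2n\|\alpha_n\|_\infty-c'n^2\,\}$, and since $\|\alpha_n\|_\infty=o(n)$ the exponent is $-c'n^2+o(n^2)\to-\infty$, giving the second limit.

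I expect the only delicate step to be the change-of-measure bound, and it is precisely what pins down $\|\alpha_n\|_\infty=o(n)$ as the right hypothesis: the crude uniform control of the un-normalized weights costs a factor $e^{2n\|\alpha_n\|_\infty}$ that must be absorbed by the $e^{-\Theta(n^2)}$ large-deviation probability of $\mathcal{G}_n(p)$ under $P_{1/2}$. Weakening the growth condition on $\|\alpha_n\|_\infty$ would force one to replace the edge-count test set by a finer one built from the whole shape of the degree distribution (which I would not pursue here); everything else is routine concentration.
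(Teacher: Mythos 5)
Your proposal is correct, and it takes a genuinely different route on the one step where the paper needs outside machinery. Both arguments share the same skeleton: a change of measure from $P_{\alpha_n}$ to the uniform law $P_0=P_{1/2}$, costing a factor $e^{2n\|\alpha_n\|_\infty}$ because every un-normalized weight $e^{\sum_i \alpha_i n_i(g)}$ lies in $[e^{-n\|\alpha\|_\infty},e^{n\|\alpha\|_\infty}]$ (the paper uses exactly this bound, via $\sum_{i\geq 1} n_i(g)=n-n_0(g)\leq n$), followed by an $e^{-\Theta(n^2)}$ bound on $P_{1/2}(\mathcal{G}_n(p))$ that absorbs the $e^{o(n^2)}$ cost. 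Where you differ is the choice of the separating event. The paper takes $\mathcal{G}_n(p)$ to be the set of graphs all of whose degrees lie within $C\sqrt{n\log n}$ of $(n-1)p$, and must then invoke Theorem 2.3 of Chatterjee--Diaconis--Sly to get $P_{1/2}(\mathcal{G}_n(p))\leq e^{-cn^2}$; you take the event defined by the total edge count $|e(g)-p\binom{n}{2}|<\delta n^2$, for which the $e^{-c'n^2}$ bound is elementary Hoeffding applied to $\binom{n}{2}$ i.i.d.\ Bernoulli$(1/2)$ edge indicators, and the $P_p$-limit is equally elementary. Your version is thus fully self-contained and in fact only uses the hypothesis $|p-1/2|>\epsilon$ (the conditions $p>\epsilon$ and $1-p>\epsilon$ are not needed for the edge-count event, whereas the paper uses them to keep the degree window inside $[0,n-1]$). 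What the paper's degree-based event buys in exchange for the external citation is a sharper qualitative statement about \emph{how} the two models separate --- through the entire degree profile rather than just the edge density --- but for the proposition as stated your simpler event suffices, and all the estimates you give (the choice $\delta=\delta(\epsilon)$ small enough that $\epsilon\binom{n}{2}-\delta n^2\geq c n^2$, and the final exponent $2n\|\alpha_n\|_\infty-c'n^2\to-\infty$) check out.
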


\begin{proof}
Let $    \mathcal{G}_n(p)$ be the set of graphs such that $(n-1)p - C \sqrt{n \log n} \leq d_i(g) \leq (n-1) p +
    C\sqrt{ n \log n}$,  for all $i=1,\ldots,n$,
    where  $d_i(g)$ denote the degree
    of the $i$th node of graph $g$ and $C$ is a positive constant to be
    specified.

    By Heoffding's inequality, for any $c > 0$, there exists a $C = C(c)$ such that
    $P_p(\mathcal{G}_n(p)) \geq 1 - \frac{1}{n^c}$, for $n$ large
    enough that $C \sqrt{\frac{\log n}{n}} < \epsilon$.
     Next, the probability    $P_{\alpha}(\mathcal{G}_p) = \frac{\sum_{g \in \mathcal{G}_n(p)} e^{ \sum_{i=1}^{n-1}
\alpha_i n_i(g)}}{\sum_{g \in \mathcal{G}_n} e^{ \sum_{i=1}^{n-1}
\alpha_i n_i(g)}}$ can be bounded from above by $\frac{e ^{M n } |
    \mathcal{G}_n(p)|}{e^{-M n } |\mathcal{G}_n|} = e^{2M n}
    \mathbb{P}_{1/2}(\mathcal{G}_n(p))$, a calculation that follows from simple algebra and the fact that, for each $g \in \mathcal{G}_n$,  $\sum_{i=1}^{n-1} n_i(g)
    = n - n_0(g) \leq n$.

Then,  since $p$  is $\epsilon$-away from $1/2$, Theorem 2.3 in \cite{cha13b} (see also
    \cite{cha13}) yields that, for all $n$ large enough, $\mathbb{P}_{1/2}(\mathcal{G}_n(p)) \leq e^{- n^2 c}$, for an appropriate constant $c$, which depends on $p$.
Thus, we have shown that, for all $n$ large enough and for each $\alpha$ in 	an $L_\infty$ ball of $0$ in
$\mathbb{R}^{n-1}$ of radius $M$, $P_{\alpha}(\mathcal{G}_n(p)) \leq e^{2M n - c n^2}$, which vanishes provided $M = o(n)$.
\end{proof}

The case of $p=1/2$ was not covered by the previous result and one might wonder
whether setting all the natural parameters to be equal to a constant vector  i.e.
$\alpha_i = \theta$ for all $i$, might yield a model close to an ER mode with $p=1/2$. This in
fact not the case, as we demonstrate next, unless $\theta = 0$.

Towards this end, notice first that the assumption that $\alpha_i = \theta$, for all $i \geq 1$, is equivalent to $\log\left( \frac{p_k}{p_0} \right) = \theta$. Thus, for any $\theta \in \mathbb{R}$,  the probability of observing a graph $g$ is $\frac{e^{-\theta n_0(g)}}{\sum_{j=0}^{n} e^{-\theta j} \nu_n(j)}$,
where $\nu_n(j)$ is the number of $n$-graphs with $j$ isolated nodes. It is
clear that the above probability is rather different from the ER model with
$p=1/2$. To add more details, it is
possible to show that $\nu_n(0) \equiv f(n) = \sum_{k=0}^n (-1)^{n-2} {n \choose k} 2^{ {n \choose k} }$, and $\nu_n(j) = {n \choose j} f(n-j)$.
    This can be obtained as the solution to the recursion
$f(n) = 	2^{ {n \choose 2}  } -  \sum_{i=0}^{n-1}f(i) {n \choose i}$, with the initial conditions $f(0) = 1$ and $f(1) = 0$.

The term $\nu_n(0)$ dominates all the others, in the sense that $\frac{f(n)}{2^{ {n \choose 2} }} \rightarrow 1$, as $n \rightarrow \infty$. Thus, when $\theta < 0$ the model will favor networks with many isolated nodes.

%XXXX TO DO: show that above limit, possibly with some rate. to do that, take the ratio of two consecutive term in the summation appearing in the definition of $f(n)$, and show that the term that dominates is the last one XXXX

%For any $p \in (0,1)$ and $c > 0$, let $\mathcal{A}_p$ denote the event that
%\[
%    \Big|    \frac{E}{ {n \choose 2}} - p \Big| \leq  \sqrt{ 2 c \frac{
%	\log\left( {n \choose 2}\right)}{ {n \choose 2} },
%    \]
%    which by Hoeffding inequality is at least $1 - \frac{2}{n^c}$.

\section{The $2$K model}
\paragraph{The exponential family form.} Consider the lexicographically ordered vector of possible degrees $((1,1),(1,2),\dots,(n-1,n-1))$ in $g$. For $n_{k_1k_2}(g)$, the number of edges with endpoint degrees $k_1$ and $k_2$ in $g$, it holds that $\sum_{k_1=1}^{n-1}\sum_{k_2=k_1}^{n-1} n_{k_1k_2}(g)=e(g)$, where $e(g)$ is the number of edges of $g$, which is still a random quantity. We present an observation with regard to $n_{k_1k_2}(g)$ as the following proposition:
\begin{prop}\label{prop:8}
The following identity always holds:
\begin{equation}\label{eq:13}
\sum_{k_1=1}^{n-1}\sum_{k_2=k_1}^{n-1} \frac{k_1+k_2}{k_1k_2}n_{k_1k_2}(g)=n-n_0(g).
\end{equation}
\end{prop}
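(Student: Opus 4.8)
The plan is to reduce the identity to the two elementary relations \eqref{eq:01} and \eqref{eq:02} already recorded in the paper, and then carefully track the coefficient with which each $n_{k_1k_2}(g)$ enters the resulting sum. First I would rewrite the right-hand side using \eqref{eq:02} as $n-n_0(g)=\sum_{k=1}^{n-1}n_k(g)$, so that it suffices to prove $\sum_{k=1}^{n-1}n_k(g)=\sum_{k_1=1}^{n-1}\sum_{k_2=k_1}^{n-1}\frac{k_1+k_2}{k_1k_2}n_{k_1k_2}(g)$.

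Next I would substitute \eqref{eq:01} for each $n_k(g)$, obtaining the double sum $\sum_{k=1}^{n-1}\frac{1}{k}\bigl(\sum_{k'=1}^{k}n_{k'k}(g)+\sum_{k'=k}^{n-1}n_{kk'}(g)\bigr)$, and then interchange the order of summation. For a fixed unordered pair with $k_1\leq k_2$, the term $n_{k_1k_2}(g)$ is picked up exactly once from the ``incoming'' inner sum at $k=k_2$ (with coefficient $1/k_2$) and exactly once from the ``outgoing'' inner sum at $k=k_1$ (with coefficient $1/k_1$); when $k_1=k_2=k$ both contributions arise at $k=k$ and add to $2/k$. In every case the aggregate coefficient of $n_{k_1k_2}(g)$ is $\frac{1}{k_1}+\frac{1}{k_2}=\frac{k_1+k_2}{k_1k_2}$, which is exactly \eqref{eq:13}.

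Alternatively, and perhaps more transparently, I would give a direct double-counting argument: assign to each non-isolated node $v$, of degree $d(v)\geq 1$, a unit of weight split evenly as $1/d(v)$ over the $d(v)$ edges incident to $v$. Summing the node weights gives $n-n_0(g)$, since there are $n-n_0(g)$ non-isolated nodes. Summing instead over edges, an edge joining nodes of degrees $k_1$ and $k_2$ carries total weight $\frac{1}{k_1}+\frac{1}{k_2}$, and grouping the edges according to their unordered endpoint-degree pair $(k_1,k_2)$ reproduces the left-hand side of \eqref{eq:13}.

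The only point that requires attention — and it is the one ``obstacle,'' such as it is — is the bookkeeping of the diagonal terms $n_{kk}(g)$: an edge between two degree-$k$ nodes must contribute $2/k$, which is consistent both with the convention underlying \eqref{eq:01} (where $n_{kk}$ lies simultaneously in the lower and the upper inner sum) and with the double-counting argument (where both of its endpoints are degree-$k$ nodes and each receives a share $1/k$). Once the diagonal is handled correctly the identity follows immediately; there is no analytic content beyond this combinatorial accounting.
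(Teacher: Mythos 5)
Your first argument is exactly the paper's proof: use \eqref{eq:02} to write $n-n_0(g)=\sum_{k=1}^{n-1}n_k(g)$, substitute \eqref{eq:01}, interchange the order of summation, and collect the coefficient $\frac{1}{k_1}+\frac{1}{k_2}$ of each $n_{k_1k_2}(g)$, with the diagonal term correctly acquiring $2/k$ because $n_{kk}$ appears in both inner sums of \eqref{eq:01}. The edge-weight double-counting you offer as an alternative is a pleasant self-contained restatement of the same bookkeeping, so the proposal is correct and takes essentially the paper's route.
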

\begin{proof}
From (\ref{eq:01}) and (\ref{eq:02}), we obtain
%\begin{equation*}
%\begin{split}
$n-n_0(g)=\sum_{k_1=1}^{n-1}\frac{1}{k_1}(\sum_{k_2=1}^{k_1}n_{k_2k_1}(g)+\sum_{k_2=k_1}^{n-1}n_{k_1k_2}(g))=
\sum_{k_1=1}^{n-1}\sum_{k_2=k_1}^{n-1}(\frac{1}{k_1}n_{k_1k_2}(g)+\frac{1}{k_2}n_{k_1k_2}(g))=\sum_{k_1=1}^{n-1}\sum_{k_2=k_1}^{n-1} \frac{k_1+k_2}{k_1k_2}n_{k_1k_2}(g)$.
%\end{split}
%\end{equation*}
\end{proof}

In order to write the model in minimal form and since $\sum_{k_1<k_2}n_{k_1k_2}(g)=e(g)$ is still a random quantity, we use the above proposition and let a scaled version of the joint degree distribution, $\tilde{n}_{k_1,k_2}(g)=((k_1+k_2)/(k_1k_2))n_{k_1k_2}(g)$, be the sufficient statistics.  Therefore, assuming that the probability of observing graphs with isolated nodes is zero, the probability of observing $g$ can be written as
\begin{equation}\label{eq:21}
P(g)=\varphi(p)\prod_{k_1=1}^{n-1}\prod_{k_2=k_1}^{n-1}p_{k_1k_2}^{\tilde{n}_{k_1k_2}(g)},
\end{equation}
where $\varphi$ is a \emph{normalizing constant} and $\sum_{k_1=1}^{n-1}\sum_{k_2=k_1}^{n-1} p_{k_1k_2}=1$.

To write (\ref{eq:21}) in a minimal form, we use the parametrization $\tilde{p}_{k_1k_2}=p_{k_1k_2}/p_{n-1,n-1}$, which by using the fact that $\tilde{p}_{n-1,n-1}=1$, implies $p_{n-1,n-1}=1/(1+\sum_{k_1=1}^{n-2}\sum_{k_2=k_1}^{n-1}\tilde{p}_{k_1k_2})$. Therefore, (\ref{eq:21}) can be rewritten as
\begin{equation}\label{eq:22}
P(g)=\varphi(p)p_{n-1,n-1}^{\sum_{k_1=1}^{n-1}\sum_{k_2=k_1}^{n-1}\tilde{n}_{k_1k_2}(g)}\prod_{k_1=1}^{n-1}\prod_{k_2=k_1}^{n-1}\tilde{p}_{k_1k_2}^{\tilde{n}_{k_1k_2}(g)}=
\frac{\phi(\tilde{p})}{(1+\sum_{k_1=1}^{n-2}\sum_{k_2=k_1}^{n-1}\tilde{p}_{k_1k_2})^n}\prod_{k_1=1}^{n-2}\prod_{k_2=k_1}^{n-1}\tilde{p}_{k_1k_2}^{n_{k_1k_2}(g)}.
\end{equation}
This can be written in exponential family form
\begin{equation}%\label{eq:3}
    P(g)=\exp\Big\{\sum_{k_1=1}^{n-2}\sum_{k_2=k_1}^{n-1}\tilde{n}_{k_1k_2}(g)\alpha_{k_1k_2}-\psi(\alpha)\Big\},
\end{equation}
where $\alpha_{k_1k_2}=\log \tilde{p}_{k_1k_2}$ and $\psi(\alpha)=n\log(1+\sum_{k_1=1}^{n-2}\sum_{k_2=k_1}^{n-1}\tilde{p}_{k_1k_2})-\log\phi(\tilde{p})$.

The sufficient statistics are in fact $\tilde{n}^{(2)}_-(g)$, referring to the vector $\tilde{n}^{(2)}(g)$ with the arbitrary element $\tilde{n}_{n-1,n-1}$ removed.
%We see that this model assigns probability zero to graphs with isolated nodes, and otherwise it assigns the same probability to graphs with the same joint degree distribution.
\paragraph{Calculating the normalizing constant.} Similar to the $1$K case, for a fixed $n$, $\psi(\alpha)=\psi_n(\alpha)$ can be explicitly represented.
In this case the normalizing constant can be calculated directly from the set of graphs with $n$ nodes without isolated nodes, which are denoted by $\mathcal{G}_{n_-}=\{g_1,\dots,g_L\}$.
\begin{prop}\label{prop:21}
The normalizing constant for the $2$K model can be written as
\begin{equation}\label{eq:25}
\psi(\alpha)=\log\Big(\sum_{l=1}^L\prod_{k_1=1}^{n-2}\prod_{k_2=k_1}^{n-1}\tilde{p}_{k_1k_2}^{\tilde{n}_{k_1k_2}(g_l)}\Big)=\log\Big(\sum_{l=1}^Le^{\sum_{k_1=1}^{n-2}\sum_{k_2=k_1}^{n-1}\tilde{n}_{k_1k_2}(g_l)\alpha_{k_1k_2}}\Big).
\end{equation}
\end{prop}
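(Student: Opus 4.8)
The plan is to mirror the proof of Proposition \ref{prop:1}: impose that the model probabilities sum to one over the correct sample space, solve for the auxiliary constant $\phi(\tilde p)$, and substitute back into the expression for $\psi$. The one structural difference is that, since the $2$K model places zero mass on graphs with isolated nodes, the relevant sample space is $\mathcal{G}_{n_-}=\{g_1,\dots,g_L\}$ rather than all of $\mathcal{G}_n$.

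First I would record $P(g_l)$ for an arbitrary $g_l\in\mathcal{G}_{n_-}$. Writing $S=\sum_{k_1=1}^{n-2}\sum_{k_2=k_1}^{n-1}\tilde p_{k_1k_2}$, the reparametrization gives $p_{n-1,n-1}=1/(1+S)$. Moreover, by Proposition \ref{prop:8} and the fact that $n_0(g_l)=0$, we have $\sum_{k_1=1}^{n-1}\sum_{k_2=k_1}^{n-1}\tilde n_{k_1k_2}(g_l)=n$, so that $p_{n-1,n-1}^{\sum\sum\tilde n_{k_1k_2}(g_l)}=p_{n-1,n-1}^n=(1+S)^{-n}$. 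Hence the first equality in (\ref{eq:22}) reduces to
\[
P(g_l)=\frac{\phi(\tilde p)}{(1+S)^n}\prod_{k_1=1}^{n-2}\prod_{k_2=k_1}^{n-1}\tilde p_{k_1k_2}^{\tilde n_{k_1k_2}(g_l)}.
\]
Summing over $l=1,\dots,L$ and using $\sum_{l=1}^L P(g_l)=1$ yields
\[
\phi(\tilde p)=\frac{(1+S)^n}{\sum_{l=1}^L\prod_{k_1=1}^{n-2}\prod_{k_2=k_1}^{n-1}\tilde p_{k_1k_2}^{\tilde n_{k_1k_2}(g_l)}}.
\]

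Finally I would substitute this into $\psi(\alpha)=n\log(1+S)-\log\phi(\tilde p)$; the two $n\log(1+S)$ contributions cancel, leaving $\psi(\alpha)=\log\big(\sum_{l=1}^L\prod_{k_1=1}^{n-2}\prod_{k_2=k_1}^{n-1}\tilde p_{k_1k_2}^{\tilde n_{k_1k_2}(g_l)}\big)$, which is the first equality in (\ref{eq:25}); the second equality then follows by substituting $\tilde p_{k_1k_2}=e^{\alpha_{k_1k_2}}$ and converting the product into an exponential of a sum. The computation is essentially bookkeeping, and the only step that genuinely uses the hypotheses — Proposition \ref{prop:8} together with the modeling assumption that configurations with isolated nodes have zero probability — is the replacement of the variable exponent $\sum\sum\tilde n_{k_1k_2}(g_l)$ by the constant $n$. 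Were one to sum over all of $\mathcal{G}_n$ instead, this exponent would be $n-n_0(g_l)$, which varies across graphs, and the closed form would not collapse in this way; so the main point to get right is simply that the sample space is $\mathcal{G}_{n_-}$.
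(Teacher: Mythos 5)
Your proposal is correct and follows essentially the same route as the paper, whose proof of Proposition \ref{prop:21} is just a one-line appeal to the argument of Proposition \ref{prop:1} via (\ref{eq:22}). You in fact supply the one detail the paper leaves implicit --- that Proposition \ref{prop:8} together with $n_0(g_l)=0$ on $\mathcal{G}_{n_-}$ forces the exponent of $p_{n-1,n-1}$ to equal the constant $n$ --- which is exactly the step that makes the telescoping cancellation of $n\log(1+S)$ go through.
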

\begin{proof}
The proof is similar to the proof of Proposition \ref{prop:1} by using (\ref{eq:22}).
\end{proof}
%Notice again that in (\ref{eq:25}) there are repeated terms for different labeling of isomorphic graphs as well as non-isomorphic graphs with the same bi-degree distribution.
\begin{example}\label{ex:21}
From the graphs in Figure 2, it is apparent that there are $2$ non-isomorphic graphs with $3$ nodes and without isolated nodes($g_1$ and $g_2$, where $g_2$ is repeated $3$ times). Proposition \ref{prop:21} implies that
$\psi(\alpha)=\log(1+3\tilde{p}_{12}^2)=\log(1+3e^{2\alpha_{12}})$.
%This can be verified by considering (\ref{eq:22}), which implies that $P(g_1)=\phi(\tilde{p})/(1+\tilde{p}_{11}+\tilde{p}_{12})^3$,
% $P(g_2)=\tilde{p}_{12}^2\cdot \phi(\tilde{p})/(1+\tilde{p}_{11}+\tilde{p}_{12})^3$, and by following the calculations explained in the proof of Proposition \ref{prop:21}.
\end{example}

\paragraph{Discussion on the existence of the maximum likelihood estimator and parameter estimation.} Suppose that $\{g_1,\dots,g_m\}$, $m\geq 1$ are $m$ iid observations of the networks with $n$ nodes and without isolated nodes. The \emph{log-likelihood} function can be written as
$l({\alpha|g_1,\dots,g_m})=(\sum_{k_1=1}^{n-2}\sum_{k_2=k_1}^{n-1}\alpha_{k_1k_2}\sum_{l=1}^m\tilde{n}_{k_1k_2}(g_l))-m\psi(\alpha)$.

Similar to the $1$K model, we study the model polytope $A_{n-1,n-2}=convhull(\{\tilde{n}^{(2)}_-(g),g\in\mathcal{G}_n\})$ to derive a necessary and sufficient condition for the existence of the MLE. Notice that the $A_{n-1,n-2}$ lies on an $n(n-1)/2-1$-dimensional space. Finding this polytope is subject to further work.

In general, however, it is easy to observe that if there are no observations of specific bi-degrees, i.e.\ $n_{k_1k_2}=0$, the corresponding parameter $\alpha_{k_1k_2}$ is not estimable. For a single observation, we conjecture that asymptotically, there are at most half of the parameters estimable: It is easy to verify that the graph $T_n$ defined in Proposition \ref{prop:4} has $(n-1/2)^2$ for $n$ odd, and $(n-2/2)^2+n/2$ for $n$ even, non-zero elements in its bi-degree vector. The ratio of the non-zero elements to the length of the vector $n(n+1)/2$ tends to $1/2$ when $n$ tends to infinity, which gives a lower bound for the ratio.

On the other hand, we conduct the following numerical experiment: By using Proposition \ref{prop:8}, we observe that in the vector with maximum non-zeros, ideally every element is $1$. Hence we compute all $(k_1+k_2)/(k_1k_2)$, order them and start adding them until we reach $n$. The number of elements we have added gives an upper bound for the number of non-zero elements of the bi-degree vector. Figure 4 illustrates the ratio to the vector length,
which is decreasing below $0.56$.
\begin{figure}[H]\label{fig:g}
\centering
\includegraphics[scale=0.3]{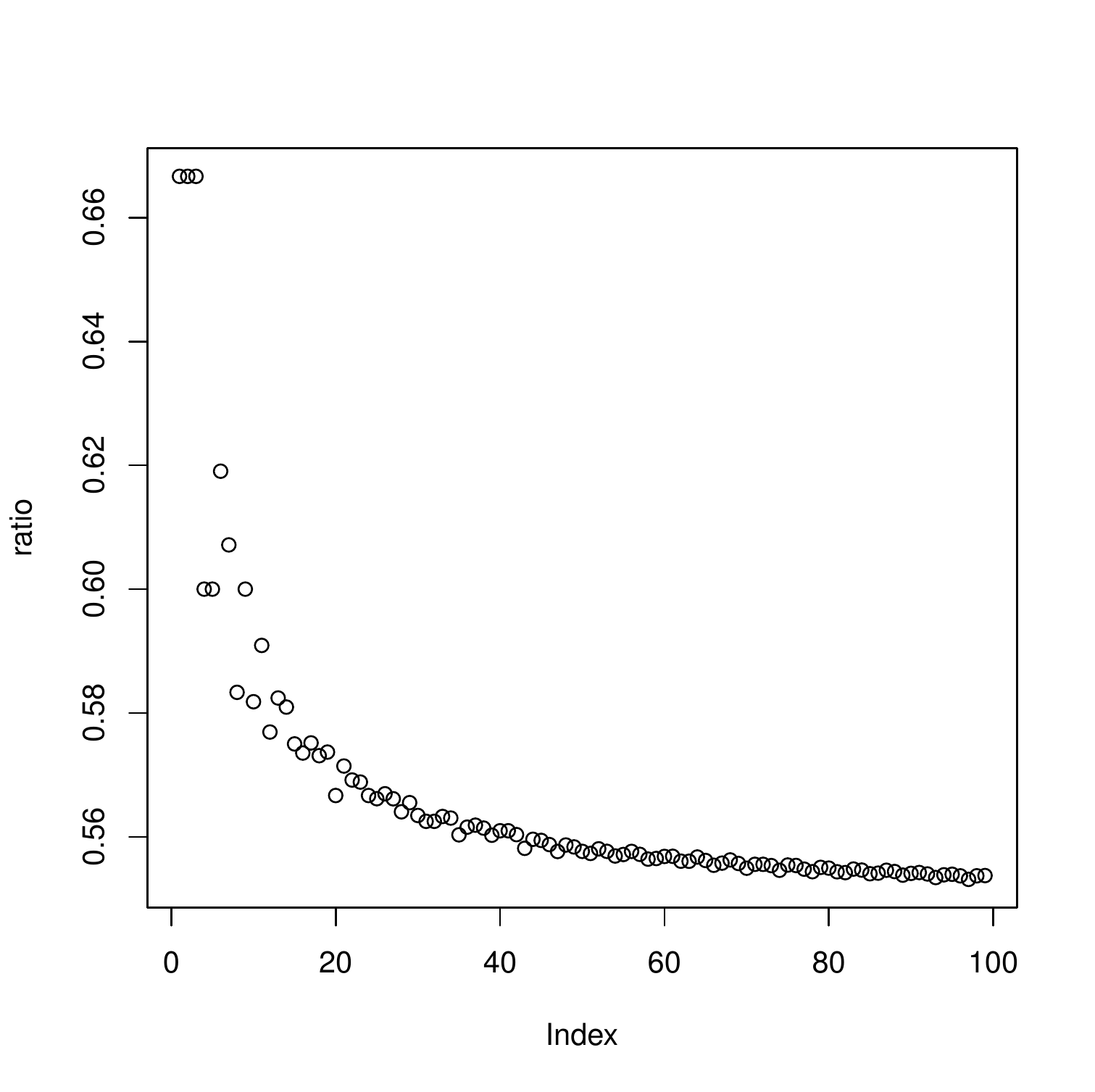}
\caption{{\footnotesize An upper bound for the ratio of maximum non-zero elements of the bi-degree vector to its length.}}
\end{figure}
% In the unusual situation where you want a paper to appear in the
% references without citing it in the main text, use \nocite

\bibliographystyle{plain}
\bibliography{bib}

\end{document}